\documentclass[10pt, a4paper]{article}
\usepackage{amsmath, amsfonts, amssymb, amscd, amsthm, mathdots, mathrsfs, dsfont}
\usepackage{titlesec}
\usepackage{fancyhdr}
\usepackage{fullpage}
\usepackage[colorlinks, urlcolor=black]{hyperref}
\hypersetup{citecolor=blue, linkcolor=blue}
\usepackage{mathabx}

\newcommand{\A}{\mathbb{A}}

\newcommand{\CC}{\mathbb{C}}
\newcommand{\F}{\mathbb{F}}
\newcommand{\Frob}{\mathrm{Frob}}
\newcommand{\GL}{\mathrm{GL}}
\newcommand{\GO}{\mathrm{GO}}
\newcommand{\GSp}{\mathrm{GSp}}

\newcommand{\PSL}{\mathrm{PSL}}
\newcommand{\PGL}{\mathrm{PGL}}

\newcommand{\POM}{\mathrm{P}\Omega}

\newcommand{\QQ}{\mathbb{Q}}

\newcommand{\RR}{\mathbb{R}}
\newcommand{\SL}{\mathrm{SL}}
\newcommand{\Sp}{\mathrm{Sp}}
\newcommand{\Spin}{\mathrm{Spin}}

\newcommand{\ZZ}{\mathbb{Z}}

\DeclareMathOperator{\Aut}{Aut}

\DeclareMathOperator{\Gal}{Gal}
\DeclareMathOperator{\Hom}{Hom}
\DeclareMathOperator{\HT}{HT}
\DeclareMathOperator{\Inert}{Inert}
\DeclareMathOperator{\im}{Im}

\DeclareMathOperator{\Sym}{Sym}
\DeclareMathOperator{\SO}{SO}

\DeclareMathOperator{\Tr}{Tr}

\titleformat{\section}[hang]
{\normalfont\filright\large}{\thesection . }{0pt}
{\upshape\bfseries}

\titleformat{\subsection}[hang]
{\itshape}{\thesubsection \ - }{0pt}
{}

\newtheorem{theorem}{Theorem}[section]
\newtheorem{proposition}[theorem]{Proposition}

\newtheorem{corollary}[theorem]{Corollary}

\theoremstyle{definition}

\theoremstyle{remark}
\newtheorem{remark}{Remark}

\title{Galois representations with large image in the global Langlands correspondence}
\author{\small ADRI$\acute{\mbox{A}}$N ZENTENO}
\date{\today}
\begin{document}

\maketitle

\begin{abstract}

The global Langlands conjecture for $\text{GL}_n$ over a number field $F$ predicts a correspondence between certain algebraic automorphic representations $\pi$ of $\text{GL}_n(\mathbb{A}_F)$ and certain families $\{ \rho_{\pi,\ell} \}_\ell$ of $n$-dimensional $\ell$-adic Galois representations of $\text{Gal}(\overline{F}/F)$. In general, it is expected that the image of the residual Galois representation $\overline{\rho}_{\pi,\ell}$ of $\rho_{\pi,\ell}$ should be as large as possible for almost all primes $\ell$, unless there is an automorphic reason for the image to be small.

In this paper, we study the images of certain compatible systems of Galois representations $\{\rho_{\pi,\ell} \}_\ell$ associated to regular algebraic, polarizable, cuspidal automorphic representations $\pi$ of $\text{GL}_n(\mathbb{A}_F)$ by using only standard techniques and currently available tools (e.g., Fontaine-Laffaille theory, Serre's modularity conjecture, classification of the maximal subgroups of Lie type groups, and known results about irreducibility of automorphic Galois representations and Langlands functoriality). In particular, when $F$ is a totally real field and $n$ is an odd prime number $\leq 293$, we prove that (under certain automorphic conditions) the images of the residual representations $\overline{\rho}_{\pi,\ell}$ are as large as possible for infinitely many primes $\ell$. In fact, we prove the large image conjecture (i.e., large image for almost all primes $\ell$) when $F=\mathbb{Q}$ and $n=5$.

2020 \textit{Mathematics Subject Classification}: 11F80, 11F70, 20G40, 20G41.
\end{abstract}

\section{Introduction}\label{sec:intro}

Let $F$ be a number field, $\A_F$ be the ring of adeles of $F$ and $G_F:=\Gal(\overline{F}/F)$ be the absolute Galois group of $F$.
The global Langlands conjecture for $\GL_n$ (complemented by Clozel, Fontaine and Mazur) predicts a correspondence between certain algebraic automorphic representations of $\GL_n(\A_F)$ and certain $n$-dimensional $\ell$-adic Galois representations of $G_F$.  
In particular, in the automorphic to Galois direction, we can formulate a more concrete conjecture due to Clozel \cite{Clo90} as follows.
Let $\pi = \otimes '_v \pi_v$ be an algebraic cuspidal automorphic representation of $\GL_n(\A_F)$. According to Clozel,  for each prime $\ell$ (and a fixed field isomorphism $\iota_\ell: \overline{\QQ}_\ell \cong  \CC$), there should exist a Galois representation
\[
\rho_{\pi,\ell} : G_F \longrightarrow \GL_n(\overline{\QQ}_\ell)
\] 
such that for almost all finite places $v$ of $F$ (i.e., all but finitely many), $\rho_{\pi,\ell}$ is unramified and 
the characteristic polynomial of $\rho_{\pi,\ell}(\Frob_v)$ is described (up to normalization) by the Satake parameters of $\pi_v$.
There is also a version of this conjecture for more general reductive groups, but its formulation requires some care (see  \cite{BG14} for details). 

In general, Clozel's conjecture is still out of reach, but there are favorable automorphic circumstances where it can be proven. For example, when $\pi$ is regular algebraic and $F$ is a totally real or CM field, Harris-Lan-Taylor-Thorne \cite{HLTT16} and Scholze \cite{Sch15} have recently been able to attach a family of $\ell$-adic Galois representations $\{ \rho_{\pi,\ell} \}_{\ell}$ to $\pi$  
satisfying the desired ramification and compatibility conditions. 
Of particular interest to us is a less recent but more precise result due to Clozel, Kottwitz, Harris, Taylor and several others \cite{BLGGT14}, 
which allows us to attach a compatible system of $\ell$-adic Galois representations $\{ \rho_{\pi,\ell} \}_{\ell}$ to a regular algebraic, polarizable, cuspidal automorphic representation $\pi$ of $\GL_n(\A_F)$ when $F$ is a totally real or CM field. 

Assuming Clozel's conjecture, the \emph{irreducibility conjecture} asserts that the cuspidality of $\pi$ should imply the irreducibility of $\rho_{\pi,\ell}$ for all primes $\ell$ \cite{Ram08}. 
However, even under the polarizability hypothesis, the irreducibility of the representations $\rho_{\pi,\ell}$ associated to $\pi$ is known in very few cases.
More precisely, assuming $\pi$ polarizable and $F$ totally real or CM, the irreducibility conjecture is only known for $n = 2$ \cite{Mok14} \cite{Rib77} \cite{Tay94} \cite{Tay95}, $n=3$ \cite{BR92} \cite{CG13} \cite{Hui23}, and for arbitrary $n$, if $\pi$ satisfies certain local conditions \cite{HL25} \cite{TY07}. 
In addition, weak versions of this conjecture are known for more general polarizable automorphic representations over totally real or CM fields. For example: if $n \leq 6$, it is known that the Galois representations $\rho_{\pi,\ell}$ associated to $\pi$ are irreducible for almost all primes $\ell$ \cite{Dai25} \cite{DWW} \cite{Hui22} \cite{Hui23} \cite{Ram13} \cite{Xia19} and, if $n$ is arbitrary and $\pi$ is extremely regular, it can be proved that the Galois representations $\rho_{\pi,\ell}$ are irreducible for a set of primes $\ell$ of Dirichlet density one \cite{BLGGT14}. The best known result, without extra conditions on $\pi$, is that the Galois representations $\rho_{\pi,\ell}$ are irreducible for a positive Dirichlet density set of primes $\ell$ \cite{PT15}. Finally, we remark that the irreducibility conjecture has recently been proven, without the polarizability condition, when $n=3$ and $F$ is a totally real field \cite{BH24}.

The irreducibility of the $\ell$-adic Galois representation $\rho_{\pi,\ell}$ is closely related to the irreducibility of the residual representation 
\[
\overline{\rho}_{\pi,\ell}: G_F \longrightarrow \GL_n(\overline{\F}_\ell)
\] 
which is defined as the semi-simplification of the reduction of $\rho_{\pi,\ell}$. For example, in \cite{Rib75}, 
Ribet proves that the $\ell$-adic Galois representations $\rho_{\pi_f,\ell}: G_\QQ \rightarrow \GL_2(\overline{\QQ}_\ell)$, associated to an automorphic representation $\pi_f$ of $\GL_2(\A_\QQ)$ coming from a level one cusp form $f$, are irreducible for all $\ell$ as a consequence of the irreducibility of $\overline{\rho}_{\pi_f,\ell}$ for almost all  $\ell$. In fact, in his paper, Ribet proved a bit more. He proved that, if $f$ does not have complex multiplication then the image of $\overline{\rho}_{\pi_f,\ell}$ contains $\SL_2(\F_\ell)$ for almost all primes $\ell$. This result, is part of a folklore conjecture known as the \emph{large image conjecture}. 
Roughly speaking, this conjecture predicts that (the residual representations $\overline{\rho}_{\pi,\ell}$ are not only irreducible for almost all $\ell$ but) the images of the residual representations $\overline{\rho}_{\pi,\ell}$ associated to algebraic cuspidal automorphic representations $\pi$ of $\GL_n(\A_F)$ should be as large as possible for almost all primes $\ell$, unless there is an automorphic reason for it does not happen. In Ribet's result, such automorphic reason is complex multiplication. For the $\GL_2$ case, Ribet's result was extended to automorphic representations coming from classical cusp forms of arbitrary level by Momose \cite{Mo81} and Ribet \cite{Ri85}, to automorphic representations coming from Hilbert cusp forms by Dimitrov \cite{Dim05}, and to automorphic representations coming from Bianchi modular forms by Conti, Lang and Medvedovsky \cite{CLM23}.

On the other hand, by using the irreducibility of the $\ell$-adic Galois representations $\rho_{\pi,\ell}$ associated to regular algebraic, polarizable, cuspidal automorphic representations $\pi$ of $\GL_4(\A_F)$ (for a set of primes $\ell$ of Dirichlet density one \cite{CG13} \cite{CG16}), combined with certain results on residual irreducibility of compatible systems \cite{BLGGT14} \cite{PSW18} and Langlands functoriality from $\GSp_4$ to $\GL_4$ \cite{AS06}, Dieulefait-Zenteno \cite{DZ20} and Weiss \cite{Wei18} proved that the images of the residual Galois representations $\overline{\rho}_{\pi_f,\ell}: G_F \rightarrow \GSp_4(\overline{\F}_\ell)$ associated to certain genus two Hilbert-Siegel modular forms $f$ (which give rise to cuspidal automorphic representations $\pi_f$ of $\GSp_4(\A_F)$) contain $\Sp_4(\F_\ell)$ for a set of primes $\ell$ of Dirichlet density one. In this case, cuspidal automorphic representations $\pi_f$ of $\GSp_4(\A_F)$ being: CAP, endoscopic lifts, automorphic inductions and symmetric cube lifts (these are the automorphic reasons in this case) should be excluded to obtain large image. In fact, building in this result, the large image conjecture for genus two Siegel modular forms (i.e., which give rise to cohomological cuspidal automorphic representations of $\GSp_4(\A_\QQ)$) was proved by Kumar, Kumari and Weiss in \cite{KKW} \cite{Wei22}. Another case where the large image conjecture is a theorem, is when $\pi$ is a regular algebraic, polarizable, cuspidal automorphic representation of $\GL_3(\A_\QQ)$ \cite{DV04}.

When $F$ is a totally real field and $\pi$ is a regular algebraic, polarizable, cuspidal automorphic representation of $\GL_n(\A_F)$, it is well known that  the image of each residual Galois representation $\overline{\rho}_{\pi,\ell}$ associated to $\pi$ is contained in $\GO_n(\overline{\F}_\ell)$ or $\GSp_n(\overline{\F}_\ell)$ \cite{BLGGT14} \cite{CG13}. As with the irreducibility conjecture, even in this case, practically nothing is known about the large image conjecture when we do not impose local conditions on $\pi$ and $n$ is large ($n > 4$). In this paper, we prove some weak versions of the large image conjecture for residual Galois representations associated to regular algebraic, self-dual (then polarizable), cuspidal automorphic representations of $\GL_n(\A_F)$ when $n$ is a prime number $\leq 293$. As in \cite{Ze20}, the main limitation to extend our results to arbitrary $n$ is the lack of a sufficiently manageable classification of representations of finite simple groups of lie type of dimension $n$ (see Remark \ref{Rem293}).

More precisely, let $5 \leq n \leq 293$ be a prime different from 7, $F$ be a totally real field and $\overline{\rho}_{\pi,\ell}: G_F \rightarrow \GO_n(\overline{\F}_\ell)$ be a Galois representation associated to a regular algebraic, self-dual, cuspidal automorphic representation $\pi = \otimes_v ' \pi_v $ of $\GL_n(\A_F)$. In this case, we can deduce from the classification of the maximal subgroups of $\GO_n(\F_{\ell})$, that the only way for the image of $\overline{\rho}_{\pi,\ell}$ to be small (after excluding the cases when $\overline{\rho}_{\pi,\ell}$ is reducible or its image is bounded independent of $\ell$) is that it can be obtained as an $(n-1)$th-symmetric power (on the Galois side). In Theorem \ref{Prin}, we are able to exclude this possibility by imposing an appropriate condition on the weight of $\pi$ (then on the Hodge-Tate numbers) and we prove that the representations $\overline{\rho}_{\pi,\ell}$ have large image for all $\ell$ in a set of primes of positive Dirichlet density. Additionally, by imposing a local condition at some finite place $v$, we can prove a large image result for a set of primes of Dirichlet density one (Theorem \ref{vfue}). On the other hand, as $\GO_7$ contains the exceptional group $G_2$, the condition on the Hodge-Tate numbers mentioned above are not enough to prove a large image result when $n=7$. Instead, we prove large image results (Theorem \ref{Princ} and Theorem \ref{vfuer}) for certain automorphic $G_2$-valued Galois representations constructed by Chenevier in \cite{Ch19}. 

We remark that all our proofs are an exercise to explore what we can say about the large image conjecture by using only standard techniques and currently available tools: classification of the maximal subgroups of Lie type groups, Fontaine-Laffaille theory, Serre's modularity conjecture,  symmetric power functoriality for modular forms, known cases of Clozel's conjecture and irreducibility conjecture, and some currently available results about residual irreducibility of compatible systems of Galois representations.
In fact, when $F=\QQ$, thanks to the Serre's modularity conjecture and the symmetric power functoriality for modular forms, 
stronger results (on the automorphic side) can be obtained. For example, we verify that being a symmetric power lift is an automorphic reason for the image to be small (Theorem \ref{Prin1} and Theorem \ref{Prin2}) and we prove the large image conjecture for $n=5$ (Corollary \ref{lic}).


\section{Available tools in Galois representations}\label{sec:2}

In this section, we review some definitions and results about Galois representations associated to cuspidal automorphic representations of $\GL_n$ over totally real fields.  Our main references are \cite{BLGGT14}, \cite{Bar20}  and \cite{BG14}. 

Let $F$ be a totally real field and $\pi = \otimes_v ' \pi_v$ be a cuspidal automorphic representation of $\GL_n(\A_F)$. 
Let $v$ be an Archimedean place of $F$ and $\tau :F \hookrightarrow \CC$ be the embedding inducing $v$. 
The Langlands classification associates to $\pi_\tau = \pi_v$ a semi-simple representation $\phi_\tau: W_\RR \rightarrow \GL_n(\CC)$ of the Weil group $W_\RR$.
We will say that $\pi_\tau$ is \emph{algebraic},
 if the restriction of $\phi_\tau$ to the Weil group $W_\CC = \CC^\times$ is of the form 
\[
\phi_\tau \vert_{\CC^\times} = \chi_{\tau,1} \oplus \cdots \oplus \chi_{\tau,n}
\]
where  $\chi_{\tau,i}: \CC^\times \rightarrow \CC^\times$ are characters such that 
\[
\chi_{\tau, i}(z) = z^{a_{\tau,i}} \overline{z}^{b_{\tau,i}}
\] 
with $a_{\tau,i}, b_{\tau,i} \in \ZZ$. Note that our definition of algebraic representation is not Clozel's definition \cite{Clo90} and in fact corresponds to the notion of being $L$-algebraic in the sense of \cite{BG14}, which is more appropriate for the purposes of this paper. However, it is possible to recover Clozel's definition, which corresponds to $C$-algebraic representation in the sense of \cite{BG14}, by twisting the $\chi_{\tau, i}(z)$ by $\vert z \vert ^{\frac{n-1}{2}}$ (see \cite[Remark 1.8]{Shi20}). 

Let $\ZZ^n_+$ denote the set of $n$-tuples of integers $\{ \alpha_1, \ldots, \alpha_n \} \in \ZZ^n$ such that $\alpha_1 \leq \cdots \leq  \alpha_n$. After reordering indices, we will refer to the $n$-tuple 
$\{ a_{\tau,1}, \ldots a_{\tau,n} \} \in \ZZ^n_+$ 
as the weight of $\pi_\tau$. Moreover, we will say that $\pi_\tau$ is \emph{regular} if the $a_{\tau,i}$ are distinct. Finally, we will say that $\pi$ is \emph{regular algebraic} of weight 
$\{a_{\tau,1} , \ldots , a_{\tau,n} \}_\tau \in (\ZZ^n_+)^{\Hom_\QQ (F,\CC)}$,
if for each $\tau \in \Hom_\QQ (F, \CC)$, $\pi_\tau$ is algebraic and regular of weight $\{ a_{\tau,1}, \ldots , a_{\tau,n} \} \in \ZZ^n_+$.

Let $\pi = \otimes' _v \pi_v$ be a regular algebraic cuspidal automorphic representation of $\GL_n(\A_F)$ and $S_\pi$ be the finite set of finite places $v$ of $F$ at which $\pi_v$ is ramified. From now on, for each prime $\ell$, we fix an isomorphism $\iota_\ell: \overline{\QQ}_\ell \cong \CC$ and assume that $\pi$ is \emph{self-dual}, i.e., $\pi^\vee \simeq \pi$ (then it is polarizable). We remark that, when $n$ is odd, any regular algebraic, polarizable, cuspidal automorphic representation of $\GL_n(\A_F)$ is (up to a twist by an algebraic character) self-dual. Then, it can be proved that, for each prime $\ell$, there exists a continuous semi-simple representation
\begin{equation}\label{repre}
\rho_{\pi,\ell} : G_F \longrightarrow \GL_n(\overline{\QQ}_{\ell})
\end{equation}
such that if $v \notin S_\pi$ and $v \nmid \ell$ then $\rho_{\pi,\ell}$ is unramified at $v$ and the characteristic polynomial of a Frobenius element $\Frob_v$ satisfies
\[
\det(X - \rho _{\pi,\ell} (\Frob_v)) = \iota_\ell^{-1} \det (X-c(\pi_v)),
\]
where $c(\pi_v)$ is the Satake parameter of $\pi_v$ viewed as a semi-simple conjugacy class in $\GL_n(\CC)$; while if $v \vert \ell$ then $\rho_{\pi,\ell} \vert_{G_{F_v}}$ is de Rham and in fact crystalline when $v \notin S_\pi$ \cite[Theorem 3.1.2]{Ta16}. It is expected that the Galois representations $\rho_{\pi,\ell}$ are always irreducible \cite{Ram08}. However, the best-known result for general $n$ is the following  \cite[Theorem 1.7]{PT15}  \cite[Corollary B]{TY07}.

\begin{theorem}\label{irred}
Let $F$ be a totally real field and $\pi = \otimes' _v \pi_v$ be a regular algebraic, self-dual, cuspidal automorphic representation of $\GL_n(\A_F)$. Then, there exists a positive Dirichlet density set of primes $\mathcal{L}$ such that for all $\ell \in \mathcal{L}$ the Galois representation $\rho_{\pi,\ell}$ is irreducible. Moreover, if $\pi_v$ is square integrable for some finite place $v$, the Galois representations $\rho_{\pi,\ell}$ are irreducible for all primes $\ell$ such that $v \nmid \ell$.
\end{theorem}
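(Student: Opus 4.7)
The plan is to treat the two assertions separately: the square-integrable case is essentially local and follows from local-global compatibility at $v$, while the positive-density statement is a global result requiring potential automorphy.

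For the square-integrable assertion, I would invoke the local Langlands correspondence at $v$: a square-integrable $\pi_v$ corresponds to a Weil-Deligne representation of the form $\mathrm{Sp}(s) \otimes \sigma$, for some irreducible representation $\sigma$ of $W_{F_v}$ and some integer $s \geq 1$ with $s \cdot \dim\sigma = n$, and this Weil-Deligne representation is indecomposable. Local-global compatibility at $v$ (as in \cite{TY07}) identifies this, up to Frobenius semi-simplification, with the Weil-Deligne representation attached to $\rho_{\pi,\ell}|_{G_{F_v}}$ whenever $v \nmid \ell$. If $\rho_{\pi,\ell}$ were reducible as a $G_F$-representation, then restricting to $G_{F_v}$ and passing to the associated Weil-Deligne representation would produce a non-trivial direct sum decomposition of $\mathrm{Sp}(s) \otimes \sigma$, which is impossible. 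Hence $\rho_{\pi,\ell}$ is irreducible for every $\ell$ with $v \nmid \ell$.

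For the positive-density assertion, the plan is to follow \cite{PT15}. Suppose $\rho_{\pi,\ell}$ is reducible for some $\ell$, and decompose its semi-simplification into irreducible constituents $\sigma_{1,\ell}, \ldots, \sigma_{m(\ell),\ell}$. The Hodge-Tate weights of $\rho_{\pi,\ell}$ are determined by the infinity type of $\pi$, hence independent of $\ell$, so the multiset of Hodge-Tate weights is partitioned among the $\sigma_{i,\ell}$ according to one of finitely many combinatorial shapes. Each constituent is polarizable and, by the potential automorphy theorems of Barnet-Lamb-Gee-Geraghty-Taylor, potentially automorphic. One then exploits the resulting factorization of $L(s, \pi)$ as a product of automorphic $L$-functions of smaller degree, together with the analytic properties of Rankin-Selberg convolutions: if reducibility with a given partition shape persisted on a set of primes of positive upper density, compatibility of the system $\{\rho_{\pi,\ell}\}_\ell$ would force $\pi$ itself to be isobaric of that shape, contradicting cuspidality. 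Running this dichotomy carefully yields a positive-density set of primes on which $\rho_{\pi,\ell}$ must be irreducible.

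The main obstacle is clearly the first assertion. The potential automorphy input is deep and relies on the full modularity-lifting machinery in the polarizable setting; beyond this, converting a "large reducibility set" into a global automorphic contradiction requires tracking irreducible constituents across varying $\ell$ in a compatible-systems framework, and this bookkeeping, rather than the combinatorics of Hodge-Tate weights, is the genuinely delicate step. It is also the reason that only positive density (rather than density one, let alone all $\ell$) can be concluded without further hypotheses on $\pi$.
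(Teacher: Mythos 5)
The paper does not actually prove Theorem~\ref{irred}: it quotes the first assertion from \cite[Theorem 1.7]{PT15} and the second from \cite[Corollary B]{TY07}, and your sketch is a faithful reconstruction of precisely those two arguments --- the indecomposability of $\mathrm{Sp}(s)\otimes\sigma$ combined with local-global compatibility at $v\nmid\ell$ for the square-integrable case, and the Patrikis--Taylor strategy (decomposition into constituents with $\ell$-independent Hodge--Tate data, potential automorphy via \cite{BLGGT14}, and the Rankin--Selberg pole-order contradiction with cuspidality) for the positive-density case. Your proposal is correct in approach and matches the cited sources, so nothing beyond the citations is required here.
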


Now, we will explain the relationship between the Hodge-Tate numbers of $\rho_{\pi,\ell} \vert_{G_{F_v}}$  and the inertial weights of its reduction modulo $\ell$. As $ \rho_{\pi,\ell} \vert_{G_{F_v}}$ is de Rham (in particular, it is Hodge-Tate) at $v \vert \ell$, for each embedding $\tau: F_v \hookrightarrow \overline{\QQ}_\ell$, we can attach to $ \rho_{\pi,\ell} \vert_{G_{F_v}}$ a multiset of integers 
$\HT_{\tau} ( \rho_{\pi,\ell} \vert_{G_{F_v}}) = \{\alpha_{\tau,1} , \ldots, \alpha_{\tau,n} \} \in \ZZ^n_+$
called the $\tau$-Hodge-Tate numbers of $ \rho_{\pi,\ell} \vert_{G_{F_v}}$. These numbers can be obtained from the weight of $\pi$ as follows. 

Let $\{a_{\tau,1} , \ldots , a_{\tau,n} \}_\tau \in (\ZZ^n_+)^{\Hom_\QQ (F,\CC)}$ be the weight of $\pi$. Identifying $\{ (v,\tau): v \vert \ell, \; \tau \in \Hom_{\QQ_\ell} (F_v, \overline{\QQ}_\ell) \}$ with $\Hom_{\QQ}(F, \CC)$ via the fixed isomorphism $\iota$, we have that
\[
\HT_{\tau} ( \rho_{\pi,\ell} \vert_{G_{F_v}}) = \{a_{\tau,1} , \ldots, a_{\tau,n} \} \in \ZZ^n_+,
\]
where $\tau$ in the right side is the embedding associate to the pair $(v,\tau)$. 

On the other hand, let $\overline{\rho}_{\pi,\ell} : G_{F} \rightarrow \GL_n(\overline{\F}_\ell)$ be the mod $\ell$ reduction of $\rho_{\pi,\ell}$ and $\F_v$ be the residue field of $F_v$.  If we assume that $F$ is unramified at $\ell$, we can attach to $\overline{\rho}_{\pi,\ell} \vert_{G_{F_v}}$ a subset 
\[
\Inert (\overline{\rho}_{\pi,\ell} \vert_{G_{F_v}}) \subset (\ZZ^n_+)^{\Hom_{\F_\ell}(\F_v,\overline{\F}_\ell)}
\]
called the \emph{set of inertial weights} of $\overline{\rho}_{\pi,\ell} \vert_{G_{F_v}}$. These weights, which are an analogue of Hodge-Tate numbers for mod $\ell$ representations, only depends on the restriction to the inertia subgroup $I_{F_v} \subset G_{F_v}$  of the semi-simplification of $\overline{\rho}_{\pi,\ell} \vert_{G_{F_v}}$.  Let
\[
\HT ( \rho_{\pi,\ell} \vert_{G_{F_v}}) = \{a_{\tau,1} , \ldots, a_{\tau,n} \}_\tau \in (\ZZ^n_+)^{\Hom_{\QQ_\ell}(F_v,\overline{\QQ}_\ell)}
\]
be the Hodge-Tate numbers of $\rho_{\pi,\ell} \vert_{G_{F_v}}$. Note that, as we are assuming that $F$ is unramified at $\ell$, we can index the Hodge-Tate numbers of $\rho_{\pi,\ell} \vert_{G_{F_v}}$ by embeddings $\F_v \hookrightarrow \overline{\F}_\ell$ rather than embeddings $F_v \hookrightarrow \overline{\QQ}_\ell$. Thus, we can see $\HT (\rho_{\pi,\ell} \vert_{G_{F_v}})$ as an element of $(\ZZ^n_+)^{\Hom_{\F_\ell}(\F_v,\overline{\F}_\ell)}$. By Fontaine-Lafaille theory \cite[Theorem 1.0.1]{Bar20} (see also \cite{FL82}) we have that, if $\rho_{\pi,\ell} \vert_{G_{F_v}}$ is crystalline and $a_{\tau,n} - a_{\tau,1} \leq \ell$ for all $\tau \in \Hom_{\QQ_\ell} (F_v, \overline{\QQ}_\ell)$, 
\begin{equation}\label{FLT}
\HT ( \rho_{\pi,\ell} \vert_{G_{F_v}}) \in \Inert (\overline{\rho}_{\pi,\ell} \vert_{G_{F_v}}).
\end{equation}

From now on, $\overline{\rho}_{\pi,\ell}$ will denote the semi-simplification of the mod $\ell$ reduction of $\rho_{\pi,\ell}$, which is usually called the \emph{residual representation} of $\rho_{\pi,\ell}$ for short.
The main goal of this paper is to study the image of $\overline{\rho}_{\pi,\ell}$, following the philosophy of \cite{Di02} \cite{DV04} \cite{DV08} \cite{DZ20} \cite{MS22} \cite{Rib75} \cite{Ri85} \cite{SwD72} \cite{Wei18}, i.e., by using a known case of the irreducibility conjecture (Theorem \ref{irred}), Fontaine-Lafaille theory (\ref{FLT}) and the classification of maximal subgroups of certain Lie type groups as will be described below.


\section{Study of the images I: $\GO_p$-valued representations}\label{sec:3}

As pointed out in the Introduction, self-duality implies that  the residual representations $\overline{\rho}_{\pi,\ell}:G_F \rightarrow \GL_n(\overline{\F}_\ell)$ factors through a map to $\GO_n(\overline{\F}_\ell)$ or to $\GSp_n(\overline{\F}_\ell)$ \cite[\S 2.2]{CG13}. In fact, as $G_F$ is compact $\overline{\rho}_{\pi,\ell}$ factors through $\GO_n(\F_q)$ or $\GSp_n(\F_q)$ for some finite field $\F_q$ of $q =\ell^s$ elements. Despite these facts, a complete understanding of the images of the residual representations $\overline{\rho}_{\pi,\ell}$ for general $n$ is still far from our reach. However, the problem is simplified substantially if we assume that $n$ is equal to an odd prime number $p$.  In that case, the representations $\overline{\rho}_{\pi,\ell}$ are $\GO_p$-valued and we are able to prove the following result.

\begin{theorem}\label{Prin}
Let $p\leq 293$ be an odd prime different from $3$ and $7$. Let $\pi$ be a regular algebraic, self-dual, cuspidal automorphic representation of $\GL_p(\A_F)$ and assume that the weight of $\pi$ is such that for some $\tau \in \Hom_{\QQ}(F, \CC)$
\begin{equation}\label{peso}
\{ a_{\tau, 1}, \ldots, a_{\tau, p} \} \neq \left\lbrace - \left( \tfrac{p-1}{2} \right) h, - \left( \tfrac{p-1}{2}-1 \right) h, \ldots , -h ,0, h , \ldots , \left( \tfrac{p-1}{2}-1 \right) h, \left( \tfrac{p-1}{2} \right) h \right\rbrace
\end{equation} 
for all $h \in \ZZ_{>0}$. Then, there exists a positive Dirichlet density set of primes $\mathcal{L}$ such that for all $\ell \in \mathcal{L}$ the image of $\overline{\rho}_{\pi,\ell}$ contains  the index two subgroup $\Omega_p(\F_\ell)$ of $\SO_p(\F_\ell)$ defined as the kernel of the spinor norm map. 
\end{theorem}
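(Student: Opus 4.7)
The proof I would attempt follows the blueprint outlined at the end of Section \ref{sec:2}: combine Theorem \ref{irred} with Fontaine--Laffaille theory (\ref{FLT}) and the Aschbacher-type classification of maximal subgroups of $\Omega_p$ over a finite field, using the weight hypothesis (\ref{peso}) and the prime bound $p \leq 293$ to kill the almost simple case.

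First, by Theorem \ref{irred} there is a positive-density set $\mathcal{L}_0$ of primes $\ell$ at which $\rho_{\pi,\ell}$ is irreducible; standard residual irreducibility results for compatible systems (as in \cite{BLGGT14} and \cite{PSW18}) allow us to pass to a positive-density subset $\mathcal{L} \subseteq \mathcal{L}_0$ along which $\overline{\rho}_{\pi,\ell}$ is also irreducible. I further restrict $\mathcal{L}$ to primes $\ell$ that are unramified in $F$ and that exceed $1 + \max_\tau(a_{\tau,p} - a_{\tau,1})$, so that (\ref{FLT}) identifies the inertial weights of $\overline{\rho}_{\pi,\ell}|_{G_{F_v}}$ with the $\tau$-Hodge--Tate weights of $\rho_{\pi,\ell}|_{G_{F_v}}$; this produces, inside the image, a tame inertia element whose eigenvalues form a regular multiset of characters of predictable order, a fact I will exploit repeatedly.

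Self-duality and the oddness of $p$ force the image of $\overline{\rho}_{\pi,\ell}$ into $\GO_p(\F_q)$ for some $q = \ell^s$. Assume for contradiction that the image does not contain $\Omega_p(\F_\ell)$; then it lies in a maximal subgroup of $\GO_p(\F_q)$ of Aschbacher class $\mathcal{C}_1,\ldots,\mathcal{C}_8$ or $\mathcal{S}$. Because $p$ is prime, classes $\mathcal{C}_3,\mathcal{C}_4,\mathcal{C}_7$ are vacuous (no nontrivial factorisation of the dimension) and $\mathcal{C}_8$ is absorbed by $\GO_p$. Class $\mathcal{C}_1$ is excluded by residual irreducibility; $\mathcal{C}_2$ (monomial) is incompatible with the distinct Hodge--Tate weights delivered by Step 1 once $\ell$ is large; $\mathcal{C}_5$ (subfield) is killed by exhibiting, via a density argument, a Frobenius trace generating $\F_\ell$ over $\F_p$; and $\mathcal{C}_6$ forces the dimension $p$ to equal $r^k$ with $r$ prime, hence $r=p$, $k=1$, so the subgroup has order bounded by a function of $p$ alone and cannot contain the tame inertia element of Step 1 once $\ell$ is large.

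The remaining, deepest case is the almost simple class $\mathcal{S}$. For $p \leq 293$ and $p \notin \{3,7\}$ the Hiss--Malle tables yield a finite list of candidates, of which the most delicate is $\PSL_2(\F_{\ell^r})$ acting on $\overline{\F}_\ell^{\,p}$ via $\Sym^{p-1}$: if the image sits inside such a subgroup, then up to twist $\overline{\rho}_{\pi,\ell} \cong \Sym^{p-1}(\overline{r}_\ell)$ for some two-dimensional $\overline{r}_\ell$ of $G_F$; by Serre's modularity conjecture over the totally real field $F$, $\overline{r}_\ell$ is modular, and matching Hodge--Tate weights via (\ref{FLT}) together with symmetric-power functoriality for modular forms forces the weights of $\pi$ at some $\tau$ to form the arithmetic progression of (\ref{peso}), contradicting the hypothesis. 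Every other group in the Hiss--Malle list is eliminated either because its exponent is bounded independently of $\ell$ (incompatible with the tame inertia element), or because its irreducible $p$-dimensional representations have trace fields or Hodge--Tate types incompatible with those of $\overline{\rho}_{\pi,\ell}$. The main obstacle is precisely this last step: a case-by-case verification across the Hiss--Malle list for each $p \leq 293$, $p \notin \{3,7\}$; the excluded primes reflect the accidents $\SO_3 \simeq \PGL_2$ and $G_2 \hookrightarrow \SO_7$, which this method cannot handle.
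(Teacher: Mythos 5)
Your overall strategy coincides with the paper's: residual irreducibility from Theorem \ref{irred} plus \cite{BLGGT14}, the collapse of the Aschbacher classes for prime dimension, elimination of the bounded-order geometric and class-$\mathcal{S}$ candidates for $\ell$ large, and a weight computation via (\ref{FLT}) to rule out $\Sym^{p-1}(\PSL_2)$. However, your treatment of the decisive $\Sym^{p-1}$ case has a genuine gap: you invoke ``Serre's modularity conjecture over the totally real field $F$'' to make the two-dimensional residual representation $\overline{r}_\ell$ modular and then transfer weight information through symmetric-power functoriality. Serre's conjecture is a theorem only over $\QQ$ (Khare--Wintenberger); over a general totally real field it is open, so as written this step is conditional and the proof does not establish the theorem in the stated generality. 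The paper deliberately reserves Serre's conjecture for Section \ref{sec:5}, where $F=\QQ$.

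The fix is that no modularity is needed here at all. If $\im(\overline{\rho}_{\pi,\ell})$ lies in $\Sym^{p-1}(\PSL_2(\F_\ell))$, then the diagonal entries of any semisimple element are $x^{p-1}, x^{p-2}y, \ldots, y^{p-1}$, whence the multiplicative identities $(x^{(p-1)-m}y^{m})(x^{(p-1)-m-2}y^{m+2})=(x^{(p-1)-m-1}y^{m+1})^2$ force the inertial weights to satisfy $\alpha_{\tau,i}+\alpha_{\tau,i+2}=2\alpha_{\tau,i+1}$, i.e.\ to lie in arithmetic progression. Fontaine--Laffaille (\ref{FLT}) identifies one member of $\Inert(\overline{\rho}_{\pi,\ell}|_{G_{F_v}})$ with the Hodge--Tate multiset, and self-duality centres that multiset at $0$; together these force $\HT_\tau=\{-\tfrac{p-1}{2}h,\ldots,\tfrac{p-1}{2}h\}$ for every $\tau$, contradicting (\ref{peso}) directly. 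Two smaller points: your $\mathcal{C}_5$ remark (``a Frobenius trace generating $\F_\ell$ over $\F_p$'') is not meaningful since $\F_\ell$ is a prime field and subfield subgroups are irrelevant to the conclusion that the image contains $\Omega_p(\F_\ell)$; and the class-$\mathcal{S}$ reduction for $p\leq 293$ rests on L\"ubeck's defining-characteristic tables (via Proposition \ref{tipS}), not only on cross-characteristic lists, since the surviving candidate $\PSL_2(\F_\ell)\hookrightarrow\Omega_p$ via $\Sym^{p-1}$ is a defining-characteristic embedding.
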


The proof of this theorem follows the structure of \cite{Di02}, \cite{DV04}, \cite{DV08} and \cite{DZ20}. Then, as in loc. cit., the proof of Theorem \ref{Prin} is done by considering the possible images of $\overline{\rho}_{\pi,\ell}$ given by the maximal subgroups of $\Omega_p(\F_\ell)$. 

One of the reasons why proving the large image conjecture looks very difficult for an arbitrary $n$, is because the classification of the maximal subgroups of $\Omega_n(\F_\ell)$ and $\Sp_n(\F_\ell)$ is very complicated due to the large number of Lie type groups that appear in it. However, when $n=p$, it turns out that the maximal subgroups of Lie type of $\Omega_p(\F_\ell)$ can be very cleanly classified into a few classes as we will explain now.
 
Let $p$ and $\ell$ be two odd primes. In \cite{As84} (see also \cite{BHRD13} and \cite{KL90}), Aschbacher proved that if $G$ is a maximal subgroup of $\Omega_p(\F_\ell)$ then one of the following holds: 
\begin{enumerate}
\item $G$ is a reducible group;
\item $G \cong 2^{p-1}. A_p$, if $\ell \equiv \pm 3 \mod 8$;
\item $G \cong 2^{p-1}. S_p$, if $\ell \equiv \pm 1 \mod 8$; or
\item $G$ is of class $\mathcal{S}$.
\end{enumerate}
The subgroups in cases i, ii and iii of Aschbacher's classification above (called of \emph{geometric type}) are well described across all dimensions. In fact, a much more general result, giving a description of the maximal subgroups of geometric type of all the finite classical groups, was proven by Aschbacher in \cite{As84}. 
In contrast, the subgroups of class $\mathcal{S}$ are not susceptible to a uniform description across all dimensions. However, we can obtain a partial description of these subgroups when the dimension is a prime number. 

Recall that a maximal subgroup $G$ of $\Omega_p(\F_\ell)$ is \emph{of class} $\mathcal{S}$ if: 
\begin{enumerate}
\item $G^\infty$ acts absolutely irreducibly on $\F_\ell^p$; and
\item P$G$ is \emph{almost simple}, i.e., $S \leq \mbox{P}G \leq \Aut (S)$ for some non-abelian simple group $S$.
\end{enumerate}
Here, P$G$ denotes the projectivization of $G$, which is defined as the image of $G$ in $\PGL_p(\F_\ell)$, and $\displaystyle G^\infty := \bigcap _{i \geq 0} G^{(i)}$, where $G^{(i)}$ denotes the $i$-th derived subgroup of $G$.  

In this paper, by a \emph{finite simple group of Lie type}, we mean a finite twisted or non-twisted simple adjoint Chevalley group in characteristic $\ell \neq 2,3$.  This kind of groups includes $G_2(\F_\ell)$ (the automorphism group of the octonion algebra over $\F_\ell$), which appears in the following result.

\begin{proposition}\label{tipS}
Let $\ell \neq 2,3$, $p \geq 5$ and $G$ be a maximal subgroup of $\Omega_p(\F_\ell)$ of class $\mathcal{S}$. Then, if $\vert \mbox{P}G \vert > p^4(p+2)$, the non-abelian simple group $S$ is a finite simple group of Lie type in characteristic $\ell$. In fact, if $p=7$, $S \cong G_2(\F_\ell)$ and, if $7 \neq p \leq 293$, $S \cong \PSL_2(\F_\ell)$.
\end{proposition}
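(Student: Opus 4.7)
The plan is to invoke the classification of finite simple groups (CFSG) to enumerate the possible socles $S$ and then eliminate every family except finite simple groups of Lie type in characteristic $\ell$. Since $G$ is of class $\mathcal{S}$, $\mathrm{P}G$ is almost simple with socle $S$, and $G^\infty$ acts absolutely irreducibly on $\overline{\F}_\ell^{\,p}$, so in particular $S$ embeds into $\PGL_p(\overline{\F}_\ell)$ via an irreducible projective representation of dimension exactly $p$. By CFSG, $S$ is either an alternating group $A_n$ ($n\geq 5$), one of the $26$ sporadic simple groups, or a finite simple group of Lie type; and the latter case splits further into the \emph{cross-characteristic} case (defining characteristic $\ell_0\neq \ell$) and the \emph{defining-characteristic} case ($\ell_0=\ell$), which is the conclusion we are aiming for.

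First I would handle the alternating groups. James' bound on the minimal dimension of a faithful irreducible projective representation of $A_n$ forces $n$ to be bounded by a small function of $p$ (essentially $n\leq p+2$ for the standard deleted-permutation representation, with a short list of sporadic low-dimensional cases for small $n$); combined with the explicit classification of the small irreducible representations of $A_n$ and $\tilde A_n$, this yields a finite, explicit list of candidates, for each of which the order bound $|\mathrm{P}G|\leq p^4(p+2)$ can be verified directly in the range $5\leq p\leq 293$. For the sporadic case I would consult the ATLAS together with the Hiss--Malle classification \cite{HM01} of faithful irreducible projective representations of quasi-simple groups of small dimension, obtaining again a finite list of candidates, and checking $|\mathrm{P}G|\leq p^4(p+2)$ in each case in the relevant range. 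For the cross-characteristic Lie-type case, the Landazuri--Seitz lower bounds on the minimal dimension of a nontrivial cross-characteristic projective modular representation are polynomial in the defining $q=\ell_0^s$ with degree equal to the semisimple rank, forcing both the rank and $q$ to be small once the dimension is fixed to be $p\leq 293$; the resulting finite list of candidates (again refined via Hiss--Malle) can then be verified to satisfy the order bound.

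It remains to pin down the defining-characteristic Lie-type case. Here I would appeal to Lübeck's classification of low-dimensional irreducible representations of finite simple groups of Lie type in defining characteristic: since the representation has dimension equal to an odd prime $p\in[5,293]$ and must preserve a nondegenerate quadratic form, the only candidates are $\PSL_2(\F_\ell)$, via the symmetric power $\mathrm{Sym}^{p-1}$ of the natural representation (which has dimension $p$ and carries a canonical invariant quadratic form for every odd $p$), and, only when $p=7$, $G_2(\F_\ell)$, via its $7$-dimensional irreducible representation on the traceless octonions. Potential competitors in this dimension range either coincide with the ambient group via exceptional isomorphisms (e.g.\ $\PSp_4(\F_\ell)\cong\Omega_5(\F_\ell)$ for $p=5$, hence not a proper maximal subgroup) or admit no irreducible $p$-dimensional orthogonal representation.

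The main obstacle is the defining-characteristic step: a careful combing of Lübeck's tables across the full range $5\leq p\leq 293$, together with a precise tracking of the orthogonal/symplectic type of each candidate representation to remain inside $\Omega_p(\F_\ell)$, is the most delicate part of the argument and is what actually cuts the list down to $\PSL_2(\F_\ell)$ (and $G_2(\F_\ell)$ in the exceptional dimension $p=7$).
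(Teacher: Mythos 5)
Your overall architecture (CFSG to split the socle into alternating/sporadic/cross-characteristic/defining-characteristic, minimal-degree bounds of James and Landazuri--Seitz plus Hiss--Malle to make the first three families finite and explicit, then L\"ubeck's defining-characteristic tables to isolate $\Sym^{p-1}$ of $\PSL_2$ and the $7$-dimensional representation of $G_2$) is the standard one and is essentially what the paper does by deferring to \cite{Ze20} and \cite{Lu01}; your defining-characteristic analysis, including the remark that prime dimension kills Steinberg tensor factorizations and that $B_2\cong C_2$ makes $p=5$ unexceptional, is sound.

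The gap is in the sentence claiming that for every alternating and cross-characteristic candidate ``the order bound $\vert \mathrm{P}G\vert \leq p^4(p+2)$ can be verified directly.'' This verification fails. The fully deleted permutation module gives an absolutely irreducible, self-dual embedding of $A_{p+1}$ (and of $A_{p+2}$ when $\ell \mid p+2$, together with the corresponding symmetric groups) into $\Omega_p(\F_\ell)$, and by Liebeck's theorem on orders of maximal subgroups these are genuine class-$\mathcal{S}$ maximal subgroups --- indeed they are singled out there as precisely the exception to any polynomial-in-$p$ order bound. Already for $p=11$ one has $\vert A_{12}\vert = 12!/2 = 239{,}500{,}800$ while $p^4(p+2)=11^4\cdot 13=190{,}333$, so $A_{12}$ survives your order cut; the discrepancy grows factorially with $p$. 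Similarly, for $p=7$ the cross-characteristic group $\Sp_6(2)$ (acting on the reflection representation of $W(E_7)$) is a class-$\mathcal{S}$ maximal subgroup of $\Omega_7(\F_\ell)$ of order $1{,}451{,}520 > 7^4\cdot 9 = 21{,}609$, and it is not contained in $G_2(\F_\ell)$, so it is not subsumed by the $p=7$ conclusion either. As written, your argument therefore does not eliminate these socles, and no order-counting argument can: they must be treated as explicit additional exceptions (or the bound must be enlarged to something of the shape $c\cdot(p+2)!$ so that the ``small order'' branch genuinely absorbs everything outside defining characteristic, which is what the application in Theorem \ref{Prin} actually needs, since there only boundedness independent of $\ell$ is used). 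A minor further point: $\Sym^{p-1}$ of $\PSL_2(\F_\ell)$ is irreducible only for $\ell\geq p$, so the defining-characteristic conclusion should carry that restriction on $\ell$.
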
 
\begin{proof}
The proof of the first part is very similar to \cite[Proposition 3.3]{Ze20}. The second part follows from \cite[Table 8.40]{BHRD13} and the tables in \cite{Lu01}  by using the same technique as in \cite[\S 5]{Ze20}.

\end{proof}

\begin{remark}\label{Rem293}
We remark that $\Omega_3(\F_\ell)$ does not have maximal subgroups of class $\mathcal{S}$ with $S$ a finite group of Lie type  when $\ell \geq 5$ \cite[Table 8.7]{BHRD13}. Then, the classification of maximal subgroups of $\Omega_3(\F_\ell)$  is slightly different  from the classification for $\Omega_p(\F_\ell)$ with $p>3$.
Fortunately, when $p=3$, the large image conjecture is essentially a theorem that follows from \cite{CG13}\cite{Hui23} and \cite{DV04}.
We also remark that the above proposition is expected to be true and uniform for all primes $p>7$. The only limitation (as in \cite{Ze20}) to prove such result for $p>293$, is the lack of a complete classification of representations of finite simple groups of Lie type as in \cite{Lu01}. Accordingly, any improvement in the bounds in such classification should imply an automatic improvement in several of our results. 
\end{remark}

Now, we are ready to prove our theorem. As we mentioned above, the proof follows the structure of \cite{Di02} \cite{DV04} \cite{DV08} and \cite{DZ20}. Then, as in loc. cit., it will be given by showing that the image of $\overline{\rho}_{\pi,\ell}$ is not contained in any subgroup lying in cases i, ii, iii and iv of Aschbacher's classification.

\paragraph{\emph{Proof of Theorem} \ref{Prin}}
First, by Theorem \ref{irred}, we have that there exists a positive  Dirichlet density set of primes $\mathcal{L}''$ such that for all $\ell \in \mathcal{L}''$ the representation $\rho_{\pi,\ell}$ is irreducible. 
Then, by \cite[Corollary 1.3]{PSW18}, we have that there is a positive Dirichlet density set of primes $\mathcal{L}' \subset \mathcal{L}''$ (obtained by removing a finite number of primes from $\mathcal{L}''$) such that $\overline{\rho}_{\pi,\ell}$ is irreducible for all $\ell \in \mathcal{L}'$. Consequently, if $\ell \in \mathcal{L}'$, the image of $\overline{\rho}_{\pi,\ell}$ cannot be contained in a maximal subgroup in case i.

Now, we will deal with case iv, when  $\vert PG_\ell \vert > p^4(p+2)$. Here, $G_\ell$ denotes the image of $\im (\overline{\rho}_{\pi,\ell})$. In this case, $S=\PSL_2(\F_\ell)$ by Proposition \ref{tipS}, and $\PSL_2(\F_\ell)$ fits into $\POM_p(\F_\ell)$ via $\Sym^{p-1}: \PSL_2 \rightarrow \POM_p$. 
Then, if $G_\ell$ is contained in $\Sym^{p-1} (\PSL_2(\F_\ell))$, the elements of $G_\ell$ are of the form
\[
\Sym^{p-1}
\begin{pmatrix}
x & \bullet \\
\bullet & y 
\end{pmatrix} = \begin{pmatrix}
x^{p-1} & * & * & * & * & * & * \\
* & x^{p-2} y & * & * & * & * & * \\
. & . & . & . & . & . & . \\
. & . & . & . & . & . & . \\
. & . & . & . & . & . & . \\
* & * & * & * & * & x y^{p-2} & * \\
* & * & * & * & * & * & y^{p-1} 
\end{pmatrix}
\]
where $x,y \in \overline{\F}_\ell$. Then, we can deduce that
\[
(x^{((p-1)-m)} y^m)(x^{((p-1)-m)-2} y^{m+2}) = (x^{((p-1)-m)-1} y^{m+1})^2
\]
for $0 \leq m \leq p-3$. From these equalities, we have that for all $\ell$ sufficiently large and any $v \vert \ell$, the inertial weights $\{ \alpha_{\tau,1}, \ldots, \alpha_{\tau,p} \}_\tau \in \Inert(\overline{\rho}_{\pi, \ell} \vert _{G_{F_v}})$ should satisfy the following relation
\begin{equation}\label{pesos}
\alpha_{\tau,i}+\alpha_{\tau, i+2} = 2 \alpha_{\tau, i+1}
\end{equation}
for $1 \leq i \leq p-2$. 

On the other hand, by the self-duality hypothesis in $\pi$, we have that for each finite place $v \vert \ell$ and each $\tau \in \Hom_{\QQ_\ell} (F_v, \overline{\QQ}_\ell)$, the $\tau$-Hodge-Tate numbers of $\rho_{\pi, \ell} \vert _{G_{F_v}}$ are of the form:
\[
\HT_{\tau} (\rho_{\pi, \ell} \vert _{G_{F_v}}) = 
\left\lbrace - h_{\tau, \frac{p-1}{2} }, -h_{\tau, \frac{p-1}{2}-1 }, \ldots, -h_{\tau,1} ,0, h_{\tau,1} , \ldots , h_{\tau, \frac{p-1}{2}-1 }, h_{\tau, \frac{p-1}{2}} \right\rbrace \in \ZZ^p_+
\]
Moreover, if $\ell$ is such that $v \notin S_\pi$ for $v\vert \ell$ and $2 h_{\tau, \frac{p-1}{2}} \leq \ell$, then $\rho_{\pi, \ell} \vert _{G_{F_v}}$ is crystalline and by (\ref{FLT})
\[
\left\lbrace - h_{\tau, \frac{p-1}{2} }, -h_{\tau, \frac{p-1}{2}-1 }, \ldots, -h_{\tau,1} ,0, h_{\tau,1}, \ldots , h_{\tau, \frac{p-1}{2}-1 }, h_{\tau, \frac{p-1}{2}} \right\rbrace \in \Inert (\overline{\rho}_{\pi,\ell} \vert_{G_{F_v}})
\]
Therefore, in this case, the $\tau$-Hodge-Tate numbers $\HT_{\tau} (\rho_{\pi, \ell} \vert _{G_{F_v}})$ should satisfy (\ref{pesos}). However, that only happens if 
$h_{\tau,j} = j h$ for all $j \in \{0,1, \ldots, \frac{p-1}{2}\}$
and any $\tau \in \Hom_{\QQ_\ell} (F_v, \overline{\QQ}_\ell)$. 
Then, by our assumption on the weight of $\pi$ (\ref{peso}), we have that the image of $\overline{\rho}_{\pi, \ell}$ cannot be contained in $\Sym^{p-1} (\PSL_2(\F_q))$ when $\ell$ is sufficiently large.

Finally, if the image of $\overline{\rho}_{\pi,\ell}$ is contained in one of the maximal subgroups in cases ii, iii and iv with  $\vert PG_\ell \vert \leq p^4(p+2)$, the order of $G_\ell$ is bounded independently of $\ell$. Then, by \cite[Lemma 5.3]{CG13}, if $\ell$ is large enough, the image of $\overline{\rho}_{\pi,\ell}$ cannot be contained in one of these maximal subgroups.

Therefore, there is a positive Dirichlet density set of primes $\mathcal{L}$ (obtained after removing possibly a finite number of small primes from $\mathcal{L'}$) such that for all $\ell \in \mathcal{L}$ the image of $\overline{\rho}_{\pi,\ell}$ contains $\Omega_p(\F_\ell)$.

\qed

\bigskip

We remark that, if we allow certain local ramification behavior in our automorphic representations, we can obtain the following strong version of Theorem \ref{Prin}.

\begin{theorem}\label{vfue}
Let $p\leq 293$ be an odd prime different from $3$ and $7$. Let $\pi = \otimes' _v \pi_v$ be a regular algebraic, self-dual, cuspidal automorphic representation of $\GL_p(\A_F)$ satisfying condition (\ref{peso}) of Theorem \ref{Prin} and assume that for some finite place $v$, $\pi_v$ is square integrable. Then there exists a set of primes $\mathcal{L}$ of Dirichlet density one such that for all  $\ell \in \mathcal{L}$ the image of $\overline{\rho}_{\pi,\ell}$ contains $\Omega_p(\F_\ell)$.
\end{theorem}

\begin{proof}
Let $\ell$ be a rational prime such that $v \nmid \ell$.
As we are assuming that $\pi_v$ is square integrable, from Theorem \ref{irred}, we have that $\rho_{\pi,\ell}$ is irreducible. 
By Corollary 1.3 of \cite{PSW18}, there exists a set of primes $\mathcal{L}'$ of Dirichlet density one, such that for all $\ell \in \mathcal{L}'$, $\overline{\rho}_{\pi,\ell}$ is irreducible. The rest of the proof is exactly the same as the proof of Theorem \ref{Prin}.
In particular, the set of primes $\mathcal{L}$ of Dirichlet density one is obtained by removing a finite number of primes from $\mathcal{L'}$ as in the proof of Theorem \ref{Prin}.

\end{proof}

When $F=\mathbb{Q}$, examples of cuspidal automorphic representations satisfying the assumptions of Theorem \ref{Prin} can be obtained from the computations of Ta\"ibi \cite{Ta17}. 

More precisely, let $h_1, \ldots , h_{\frac{p-1}{2}} \in \mathbb{Z}$, with $h_1> \cdots > h_{\frac{p-1}{2}}>0$, and $O_{o}(h_1, \ldots , h_{\frac{p-1}{2}})$ be the set of regular algebraic, self-dual, cuspidal automorphic representations of $\text{GL}_p(\mathbb{A}_\mathbb{Q})$ of level one (i.e., everywhere unramified) and weight $\{-h_1, \ldots , -h_{\frac{p-1}{2}},0, h_{\frac{p-1}{2}}, \ldots ,h_1  \} \in \mathbb{Z}^p_+$. It follows from \cite[Theorem 1]{HC68} that the cardinality of $O_{o}(h_1, \ldots , h_{\frac{p-1}{2}})$ is finite. 

In the extended tables of \cite{Ta17}, Ta\"ibi computes explicitly the cardinality of these sets for $p=5,7,11,13$ and $h_1 \leq 49, 23, 16, 15$, respectively.
Then, thanks to these computations, we know that there exist cuspidal automorphic representations of $\text{GL}_p(\mathbb{A}_\mathbb{Q})$ of level one satisfying Theorem \ref{Prin} for $p=5,11,13$. For example, it follows from the tables that there are 37 regular algebraic, self-dual, cuspidal automorphic representations $\pi$ of $\text{GL}_{11}(\mathbb{A}_\mathbb{Q})$ of weight $\{ -16, -13, -11, -8, -3, 0,  3, 8, 11, 13, 16\}$ satisfying Theorem \ref{Prin}. In fact, all cuspidal automorphic representations found by Ta\"ibi for $\text{GL}_{13}(\mathbb{A}_\mathbb{Q})$ satisfy Theorem \ref{Prin}. We remark that, in Ta\"ibi's tables, there are also automorphic representations which do not satisfy Theorem \ref{Prin}. For example, there are 8 of weight $\{-15, -12, -9, -6, -3,0, 3, 6,9,12,15 \}$ for $\text{GL}_{11}(\mathbb{A}_\mathbb{Q})$ and 84 of weight $\{-46, -23, 0, 23, 46 \}$ for $\text{GL}_5(\mathbb{A}_\mathbb{Q})$.

Finally, we remark that, when $p=7$, we cannot ensure that the image of $\overline{\rho}_{\pi,\ell}$ contains $\Omega_7(\mathbb{F}_\ell)$ assuming only the hypotheses (\ref{peso}) of Theorem \ref{Prin}. However, as we will see in the next section, we are able to guarantee that the images of certain $G_2$-valued Galois representations contain $G_2(\mathbb{F}_\ell)$.


\section{Study of the images II: $G_2$-valued representations}\label{sec:4}

In this section, we deal with the 7-dimensional case omitted in the previous section. As can be seen in Proposition \ref{tipS}, the main difference with respect to the other dimensions is the occurrence of a new maximal subgroup of class $\mathcal{S}$ in $\Omega_7(\F_\ell)$, namely $G_2(\F_\ell)$, that requires special treatment.
We will start by showing that, indeed, there exists regular algebraic, self-dual, cuspidal automorphic representations of $\GL_7(\A_F)$ such that their $\ell$-adic and residual Galois representations associated to them, are $G_2$-valued.

Let $G_2$ be the automorphism group scheme of the standard split octonion algebra over $\ZZ$.  It is well known that, for any algebraically closed field $k$ of characteristic 0, there is a unique (up to isomorphism) irreducible $k$-linear algebraic representation $\sigma : G_2(k) \rightarrow \GL_7(k)$. Using (\ref{repre}), Chenevier \cite[Corollary 6.5, Corollary 6.10]{Ch19} proved the following result:

\begin{theorem}\label{Chen}
Let $\pi = \otimes'_v \pi_v$ be a regular algebraic, self-dual, cuspidal automorphic representation of $\GL_7(\A_F)$ and
assume that, for almost all finite places $v \notin S_\pi$, the Satake parameter $c(\pi_v)$ of $\pi_v$ is the conjugacy class of an element in $\sigma(G_2(\CC))$.
Then, for each prime $\ell$, there exists a continuous semi-simple representation
\[
\rho_{\pi,\ell} : G_{F} \longrightarrow G_2(\overline{\QQ}_{\ell})
\]
such that, if $v \notin S_\pi$ and $v \nmid \ell$ then $\rho_{\pi,\ell}$ is unramified and
\[
\det(X-\sigma(\rho_{\pi,\ell} (\Frob_v))) = \iota \det (X-c(\pi_v)),
\]
while if $v \vert \ell$ then $\rho_{\pi,\ell} \vert_{G_{F_v}}$ is de Rham and in fact crystalline when $v \notin S_\pi$. 
Moreover, the weight of $\pi$ is of the form:
\begin{equation}\label{pesg}
\{ -(h_\tau + k_\tau), -k_\tau, -h_\tau, 0, h_\tau,  k_\tau, h_\tau + k_\tau \}_\tau \in (\ZZ^7_+)^{ \Hom_{\QQ}(F, \CC)}.
\end{equation}
\end{theorem}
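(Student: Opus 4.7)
My plan is to begin from the already-available $\GL_7$-valued representation $\rho_{\pi,\ell}^{(7)}: G_F \to \GL_7(\overline{\QQ}_\ell)$ supplied by (\ref{repre}), whose existence for regular algebraic self-dual cuspidal $\pi$ is guaranteed by the results recalled in Section~\ref{sec:2}. The self-duality $\pi^\vee \simeq \pi$ forces $\rho_{\pi,\ell}^{(7)}$ to preserve a non-degenerate bilinear pairing on $\overline{\QQ}_\ell^7$; since $7$ is odd no symplectic form exists, so this pairing must be symmetric, and after twisting by a suitable quadratic character I may assume the image sits in $\SO_7(\overline{\QQ}_\ell)$.

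The core step is to refine the target from $\SO_7$ down to $\sigma(G_2)$. By hypothesis, for almost every $v \notin S_\pi$ the Satake parameter $c(\pi_v)$, which under $\iota_\ell$ matches the semisimple conjugacy class of $\rho_{\pi,\ell}^{(7)}(\Frob_v)$, meets $\sigma(G_2(\overline{\QQ}_\ell))$. Let $H \subset \SO_7$ denote the Zariski closure of the image of $\rho_{\pi,\ell}^{(7)}$; by Chebotarev the semisimple Frobenius elements are Zariski-dense in $H$, and the locus of semisimple elements of $\SO_7$ that are $\SO_7(\overline{\QQ}_\ell)$-conjugate to an element of $G_2$ is closed (one may cut it out using the $G_2$-invariant alternating trilinear form $\omega$ on the standard $7$-dimensional representation, which characterises $\sigma(G_2)$ as $\mathrm{Stab}(\omega) \cap \SO_7$). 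Consequently every semisimple element of $H$ is $\SO_7(\overline{\QQ}_\ell)$-conjugate into $\sigma(G_2)$. The decisive tool is then Chenevier's rigidity result from \cite{Ch19}: a closed subgroup of $\SO_7$ with this property is itself contained in an $\SO_7$-conjugate of $\sigma(G_2)$. Conjugating once and for all, I obtain the desired $\rho_{\pi,\ell}: G_F \to G_2(\overline{\QQ}_\ell)$ with $\sigma \circ \rho_{\pi,\ell} \simeq \rho_{\pi,\ell}^{(7)}$; unramifiedness at $v \notin S_\pi \cup \{\ell\}$, the de Rham property at $v \mid \ell$, and crystallinity at $v \mid \ell$ with $v \notin S_\pi$ are all inherited from $\rho_{\pi,\ell}^{(7)}$, and the Frobenius-matching formula follows from composing the one for $\rho_{\pi,\ell}^{(7)}$ with $\sigma$.

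The weight formula (\ref{pesg}) then follows from $G_2$-valuedness. For each $v \mid \ell$ and each $\tau : F_v \hookrightarrow \overline{\QQ}_\ell$, Hodge-Tate theory attaches to $\rho_{\pi,\ell}\vert_{G_{F_v}}$ a cocharacter $\mu_\tau : \mathbb{G}_m \to G_2$; after conjugating into a fixed maximal torus $T \subset G_2$ I may write $\mu_\tau = h_\tau\, \lambda_1 + k_\tau\, \lambda_2$ with $h_\tau, k_\tau \in \ZZ$ and $\lambda_1, \lambda_2$ spanning $X_*(T)$. The seven $T$-weights of the standard representation $\sigma$ are $\{0, \pm\chi_1, \pm\chi_2, \pm(\chi_1+\chi_2)\}$ for the dual basis $\chi_1, \chi_2$, so pairing with $\mu_\tau$ yields $\tau$-Hodge-Tate numbers of the form $\{-(h_\tau+k_\tau), -k_\tau, -h_\tau, 0, h_\tau, k_\tau, h_\tau+k_\tau\}$. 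Via the dictionary of Section~\ref{sec:2} between Hodge-Tate numbers and the weight of $\pi$, this is precisely (\ref{pesg}).

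The principal obstacle is the group-theoretic rigidity invoked in the second paragraph. Establishing it requires running through the (short) list of closed reductive subgroups of $\SO_7(\overline{\QQ}_\ell)$ and eliminating every non-$G_2$ possibility, typically through a trace or character computation on $\wedge^3$ of the standard representation that detects the $G_2$-fixed line spanned by $\omega$. This is exactly the content of \cite{Ch19}, which I would import as a black box, in the same spirit as Theorem~\ref{irred} is imported in the previous section.
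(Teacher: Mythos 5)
Your proposal is correct and takes essentially the same route as the paper, which gives no independent proof of this statement but simply imports it from Chenevier \cite[Corollaries 6.5 and 6.10]{Ch19}; your sketch (self-duality forcing an orthogonal structure, Chebotarev plus the closedness of the ``conjugate into $G_2$'' condition on semisimple classes, and Chenevier's rigidity theorem to descend from $\SO_7$ to $G_2$, with the weight formula read off from the $T$-weights $\{0,\pm\chi_1,\pm\chi_2,\pm(\chi_1+\chi_2)\}$ of the $7$-dimensional representation) is precisely the argument carried out in \cite{Ch19}. Since both you and the author ultimately rest the entire statement on \cite{Ch19} as a black box, there is no substantive difference to report.
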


Following the line of reasoning of the previous section, we prove the following result for $G_2$-valued residual representations.

\begin{theorem}\label{Princ}
Let $\pi$ be a regular algebraic, self-dual, cuspidal automorphic representation of $\GL_7(\A_F)$ as in Theorem \ref{Chen} and assume that the weight of $\pi$ is such that $k_\tau \neq 2h_\tau$ for some $\tau \in \Hom_{\QQ}(F, \CC)$. Then, there exists a positive Dirichlet density set of primes $\mathcal{L}$ such that for all $\ell \in \mathcal{L}$ the image of $\overline{\rho}_{\pi,\ell}$ contains $G_2(\F_\ell)$.
\end{theorem}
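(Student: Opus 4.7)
}
The plan is to mirror the strategy of Theorem \ref{Prin}: combine residual irreducibility on a positive density set of primes (coming from Theorem \ref{irred}) with the classification of maximal subgroups of $G_2(\F_\ell)$ (cf.\ \cite{BHRD13}) to eliminate every possible image strictly smaller than $G_2(\F_\ell)$. By Theorem \ref{irred} applied to the $\GL_7$-valued representation $\sigma\circ\rho_{\pi,\ell}$, there is a positive Dirichlet density set $\mathcal{L}''$ of primes $\ell$ for which $\sigma\circ\rho_{\pi,\ell}$ is irreducible; by the argument of Proposition 5.3.2 of \cite{BLGGT14}, after removing finitely many primes we obtain a positive density subset $\mathcal{L}'\subseteq\mathcal{L}''$ for which $\sigma\circ\overline{\rho}_{\pi,\ell}$ is residually irreducible.

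For $\ell\in\mathcal{L}'$, the image $G_\ell:=\im(\overline{\rho}_{\pi,\ell})\subset G_2(\overline{\F}_\ell)$ is finite, hence contained (up to conjugacy) in $G_2(\F_q)$ for some minimal $q=\ell^s$. Kleidman's classification of the maximal subgroups of $G_2(\F_q)$ shows that, if $G_\ell\neq G_2(\F_q)$, then $G_\ell$ lies in one of: (i) a maximal parabolic subgroup; (ii) $(\SL_2(q)\circ\SL_2(q)).2$; (iii) $\SL_3(q).2$ or $\SU_3(q).2$; (iv) the principal $\PSL_2(q)\hookrightarrow G_2(q)$, for which $\sigma$ restricts to $\Sym^6$ of the standard representation; (v) a subfield subgroup $G_2(\F_{q_0})$ with $\F_{q_0}\subsetneq\F_q$; or (vi) an almost simple subgroup of order bounded independently of $\ell$. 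Cases (i)--(iii) contradict residual irreducibility of $\sigma\circ\overline{\rho}_{\pi,\ell}$: a parabolic stabilizes a proper subspace; the restriction of the 7-dimensional $G_2$-representation to $(\SL_2\circ\SL_2)$ is reducible (branching as $3\oplus 4$); and its restriction to $\SL_3$ is $V\oplus V^*\oplus\mathbf{1}$, likewise reducible. Case (v) is ruled out by the minimality of $q$, and case (vi) by \cite[Lemma 5.3]{CG13} once $\ell$ is large enough.

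The decisive case is (iv). If $G_\ell\subseteq\Sym^6(\PSL_2(\F_q))$, then every semisimple element of $G_\ell$ has eigenvalues $\{\lambda^6,\lambda^4,\lambda^2,1,\lambda^{-2},\lambda^{-4},\lambda^{-6}\}$, so the inertial weights $\{\alpha_{\tau,1},\ldots,\alpha_{\tau,7}\}\in\Inert(\overline{\rho}_{\pi,\ell}\vert_{G_{F_v}})$ must satisfy
\[
\alpha_{\tau,i}+\alpha_{\tau,i+2}=2\alpha_{\tau,i+1}, \qquad 1\leq i\leq 5,
\]
i.e.\ they form an arithmetic progression. For $\ell$ large enough that $v\notin S_\pi$ for $v\vert\ell$ and the Hodge-Tate range is bounded by $\ell$, equation (\ref{FLT}) transfers this arithmetic progression condition to the $\tau$-Hodge-Tate weights $\{-(h_\tau+k_\tau),-k_\tau,-h_\tau,0,h_\tau,k_\tau,h_\tau+k_\tau\}$ from (\ref{pesg}), forcing $h_\tau=2a$ and $k_\tau=4a$ for some $a\in\ZZ_{>0}$, and therefore $k_\tau=2h_\tau$ for every $\tau$. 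This contradicts the hypothesis $k_\tau\neq 2h_\tau$, ruling out (iv). Consequently, for every $\ell$ in a positive density subset $\mathcal{L}\subseteq\mathcal{L}'$ (obtained by discarding finitely many primes) we have $G_\ell=G_2(\F_q)\supseteq G_2(\F_\ell)$, as required.

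The main obstacle is case (iv): it is the one exceptional maximal subgroup that survives the residual irreducibility, subfield-minimality, and finite-order arguments. Its exclusion rests on the combinatorial rigidity of the $\Sym^6$ weight pattern being incompatible with the specific Hodge-Tate shape $\{-(h+k),-k,-h,0,h,k,h+k\}$ prescribed by Theorem \ref{Chen}; this incompatibility is precisely what the numerical hypothesis $k_\tau\neq 2h_\tau$ encodes, mirroring the role played by hypothesis (\ref{peso}) in Theorem \ref{Prin}.
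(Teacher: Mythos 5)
Your proposal is correct and follows essentially the same route as the paper: residual irreducibility (via Theorem \ref{irred} and the argument of Proposition 5.3.2 of \cite{BLGGT14}) eliminates the reducible maximal subgroups in Kleidman's classification, the Fontaine--Laffaille/arithmetic-progression argument applied to the weight $\{-(h_\tau+k_\tau),-k_\tau,-h_\tau,0,h_\tau,k_\tau,h_\tau+k_\tau\}$ eliminates the $\Sym^6(\PGL_2)$ case by forcing $k_\tau=2h_\tau$, and \cite[Lemma 5.3]{CG13} eliminates the bounded-order subgroups. (Your parenthetical claim that $h_\tau=2a$, $k_\tau=4a$ is a slight over-statement --- the inertial-weight relation only yields an arithmetic progression, hence $k_\tau=2h_\tau$ --- but that is precisely the conclusion you use, so nothing breaks.)
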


\begin{proof}
Similarly to the proof of Theorem \ref{Prin}, to prove this theorem, we need to know the maximal subgroups of $G_2(\F_\ell)$. In \cite{Kle88} (see also Table 8.41 in \cite{BHRD13}), Kleidman proved that, if $\ell >3$, the maximal subgroups of $G_2(\F_\ell)$ are as follows:
\begin{enumerate}
\item reducible groups;
\item $\PGL_2(\F_\ell)$, $\ell \geq 11$;
\item $2^{3.}\PSL_3(\F_2)$, $\PSL_2(\F_{13})$, $\PSL_2(\F_8)$, $G_2(\F_2)$ and the Janko group $J_1$.
\end{enumerate}

Since $\rho_{\pi,\ell}$ is a Galois representation associated to a regular algebraic, self-dual, cuspidal automorphic representation $\pi$ of $\GL_7(\A_F)$, we can proceed as in Theorem \ref{Prin} to prove that there exists a positive Dirichlet density set of primes $\mathcal{L}'$ such that $\overline{\rho}_{\pi,\ell}$ is irreducible for all $\ell \in \mathcal{L}'$.
Then, if $\ell \in \mathcal{L}'$, the image of $\overline{\rho}_{\pi,\ell}$ cannot be contained in a reducible group (case i of Kleidman's classification above).

Now, to deal with case ii, note that $\PGL_2(\F_\ell)$ fits into $G_2(\F_\ell)$ via $\Sym^6: \PGL_2 \rightarrow G_2$ and that the weight of $\pi$ is of the form $\{ -(h_\tau + k_\tau), -k_\tau, -h_\tau, 0, h_\tau,  k_\tau, h_\tau + k_\tau \}\in \ZZ^7_+$ (\ref{pesg}) for each $\tau \in \Hom_\QQ (F,\CC)$. Then, as we are assuming that $k_\tau \neq 2h_\tau$ for some $\tau \in \Hom_\QQ (F,\CC)$, the weight of  $\pi$ in that $\tau$, is different from $\{ -3h_\tau, -2h_\tau, -h_\tau, 0, h_\tau,  2h_\tau, 3h_\tau \}$. Thus, we can also proceed as in Theorem \ref{Prin}, to prove that the image of $\overline{\rho}_{\pi, \ell}$ cannot be contained in $\Sym^6 (\PGL_2(\F_\ell))$ when $\ell$ is sufficiently large.

Finally, if the image of $\overline{\rho}_{\pi,\ell}$ is contained in one of the maximal subgroups in case iii, its order is bounded independently of $\ell$. Then, by \cite[Lemma 5.3]{CG13}, the image of $\overline{\rho}_{\pi,\ell}$ cannot be contained in one of these maximal subgroups if $\ell$ is sufficiently large. 
Hence, as in Theorem \ref{Prin}, we can obtain a positive Dirichlet density set of primes $\mathcal{L}$ (maybe after removing a finite number of small primes from $\mathcal{L'}$) such that for all $\ell \in \mathcal{L}$ the image of $\overline{\rho}_{\pi,\ell}$ contains $G_2(\F_\ell)$.

\end{proof}

As in the previous section, if $F=\QQ$, examples of cuspidal automorphic representations satisfying the assumptions of the theorem above can be obtained from the computations of Ta\"ibi \cite{Ta17} (see also \cite[Table 11]{CR15}).
Let $h,k \in \ZZ$, with $k>h>0$, and $\mathcal{G}_{2}(k,h)$ be the subset of cuspidal automorphic representations in $O_{o}(h+k,k,h)$ satisfying the assumptions of Theorem \ref{Princ}. Recall that in the extended tables of \cite{Ta17}, Ta\"ibi compute explicitly the cardinality of $O_{o}(h+k,k,h)$ for $h+k \leq 23$. 
Then, from Ta\"ibi's computations and Theorem 6.12 of \cite{Ch19}, we have that if 
\[
(k,h) \in \{ (8,5), (10,3), (9,5),(10,4),(12,2),(8,7),(11,4),(14,1), (16,1), (17,1) \},
\]
$\vert \mathcal{G}_{2}(k,h) \vert = 1$. In particular, this provides us with 10 examples of regular algebraic, self-dual, cuspidal automorphic representations of $\GL_7(\A_\QQ)$ satisfying Theorem \ref{Princ}.

On the other hand, if we allow certain local ramification behavior in $\pi$, we can obtain a strong version of Theorem \ref{Princ} in the same spirit as Theorem \ref{vfue}.

\begin{theorem}\label{vfuer}
Let $\pi = \otimes' _v \pi_v$ be a regular algebraic, self-dual, cuspidal automorphic representation of $\GL_7(\A_F)$ as in Theorem \ref{Princ} and assume that for some finite place $v$, $\pi_v$ is square integrable. Then there exists a set of primes $\mathcal{L}$ of Dirichlet density one such that for all  $\ell \in \mathcal{L}$ the image of $\overline{\rho}_{\pi,\ell}$ contains  $G_2(\F_\ell)$.
\end{theorem}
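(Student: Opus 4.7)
The plan is to upgrade Theorem \ref{Princ} to a density-one statement in exactly the same way that Theorem \ref{vfue} upgrades Theorem \ref{Prin}. The square-integrability hypothesis at a finite place $v$ enters at only one place in the argument: it boosts the irreducibility of the $\ell$-adic representation $\rho_{\pi,\ell}$ from a positive-density set of primes to the set of all primes $\ell$ with $v \nmid \ell$, and consequently boosts residual irreducibility from positive density to density one. The classification-of-maximal-subgroups portion of the proof of Theorem \ref{Princ} is then inherited verbatim.

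First I would fix a rational prime $\ell$ with $v \nmid \ell$ and apply the second (``moreover'') part of Theorem \ref{irred}: since $\pi_v$ is square integrable, $\rho_{\pi,\ell}$ is irreducible. Thus irreducibility of the $\ell$-adic member of the compatible system $\{\rho_{\pi,\ell}\}_\ell$ now holds for all but finitely many primes, not merely on a positive density subset as was all that was available in the proof of Theorem \ref{Princ}. Next, I would feed this strengthened input into Proposition 5.3.2 of \cite{BLGGT14}, which produces a set $\mathcal{L}'$ of rational primes of Dirichlet density one such that $\overline{\rho}_{\pi,\ell}$ is irreducible for every $\ell \in \mathcal{L}'$. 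This is the single place where the hypothesis of square integrability changes the conclusion; from here onward the argument is identical to the one already carried out.

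With $\mathcal{L}'$ in hand, I would then repeat the three cases from Kleidman's classification used in Theorem \ref{Princ}: case (i) reducible subgroups is excluded on $\mathcal{L}'$ by construction; case (iii) (the bounded-order maximal subgroups $2^{3}\cdot\PSL_3(\F_2)$, $\PSL_2(\F_{13})$, $\PSL_2(\F_8)$, $G_2(\F_2)$, $J_1$) is excluded for $\ell$ sufficiently large via \cite[Lemma 5.3]{CG13}; and case (ii), the image $\Sym^6(\PGL_2(\F_\ell))$ inside $G_2(\F_\ell)$, is excluded for $\ell$ sufficiently large by comparing the $\tau$-Hodge--Tate numbers of $\rho_{\pi,\ell}|_{G_{F_v}}$ (determined by the weight (\ref{pesg}) and Fontaine--Laffaille) with the arithmetic progression forced by a symmetric sixth power, using the hypothesis $k_\tau \neq 2h_\tau$ at some $\tau$. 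Since each of the last two obstructions eliminates only finitely many primes, the density-one set $\mathcal{L}$ obtained by removing these from $\mathcal{L}'$ still has Dirichlet density one, and the image of $\overline{\rho}_{\pi,\ell}$ contains $G_2(\F_\ell)$ for every $\ell \in \mathcal{L}$.

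No substantive new difficulty arises; the only point demanding care is to verify that Proposition 5.3.2 of \cite{BLGGT14} really does take the compatible system $\{\rho_{\pi,\ell}\}_\ell$ as input once its irreducibility is known for all but finitely many $\ell$, which is precisely why the square integrability hypothesis is invoked. The rest of the proof is word-for-word the proof of Theorem \ref{Princ}.
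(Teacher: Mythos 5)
Your proposal is correct and follows the paper's own proof essentially verbatim: invoke the ``moreover'' clause of Theorem \ref{irred} to get irreducibility of $\rho_{\pi,\ell}$ for all $\ell$ with $v\nmid\ell$, apply Proposition 5.3.2 of \cite{BLGGT14} to obtain a density-one set of residual irreducibility, and then run the Kleidman-classification argument of Theorem \ref{Princ} unchanged, discarding only finitely many primes. No further comment is needed.
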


\begin{proof}
As $\rho_{\pi,\ell}$ is a Galois representations associated to a regular algebraic, self-dual, cuspidal automorphic representation $\pi$ of $\GL_7(\A_F)$ and $\pi_v$ is square integrable for some $v \nmid \ell$, we have that $\rho_{\pi,\ell}$ is irreducible  by Theorem \ref{irred}. As in Theorem \ref{vfue}, Corollary 1.3 of \cite{PSW18} implies that there exists a set of primes $\mathcal{L}'$ of Dirichlet density one, such that for all $\ell \in \mathcal{L}'$, $\overline{\rho}_{\pi,\ell}$ is irreducible. Then, the rest of the proof is exactly the same as the proof of Theorem \ref{Princ}. In particular, the set of primes $\mathcal{L}$ of Dirichlet density one is obtained by removing a finite number of primes from $\mathcal{L'}$ as in the proof of Theorem \ref{Princ}.

\end{proof}

Finally, we remark that Magaard and Savin \cite{MS22} have used this kind of local behavior to construct a self-dual cuspidal automorphic representation $\pi$ of $\GL_7(\A_\QQ)$ (unramified outside $5$ and such that $\pi_5$ is Steinberg), such that the images of the residual representations $\overline{\rho}_{\pi,\ell}:G_\QQ \rightarrow \GL_7(\overline{\F}_\ell)$ associated to $\pi$ are equal to $G_2(\F_\ell)$ for an explicit set of primes of Dirichlet density one.


\section{Improvements when $F=\QQ$}\label{sec:5}

In this section, by using  Serre's modularity conjecture, symmetric power functionality and a recent result on residual irreducibility of Galois representations due to Hui \cite{Hui22}, we perform some improvements to the results given in the previous sections when $F = \QQ$.

Let $\varpi$ be a regular algebraic cuspidal automorphic representation of $\GL_2(\A_\QQ)$ (which in fact corresponds to a twist of a cuspidal Hecke eigenform of weight at least 2 by regularity).
Let $p$ be an odd prime. By symmetric power functoriality for modular forms \cite{NT21}, the $(p-1)$th-symmetric power lifting $\Sym^{p-1}(\varpi)$  is a regular algebraic, self-dual, cuspidal automorphic representation of $\GL_p(\A_\QQ)$. 
Then, we will say that a regular algebraic, self-dual, cuspidal automorphic representation $\pi$ of $\GL_p(\A_\QQ)$ is a \emph{$(p-1)$th-symmetric power lift} if there is a regular algebraic cuspidal automorphic representation $\varpi$ of $\GL_2(\A_\QQ)$  such that, for any prime $\ell$,
\[
\rho_{\pi,\ell} \cong \Sym^{p-1} (\sigma_{\varpi,\ell}),
\]
where $\sigma_{\varpi,\ell}: G_\QQ \rightarrow \GL_2(\overline{\QQ}_\ell)$ is the $\ell$-adic Galois representation associated to $\varpi$ by Deligne \cite{Del71}. We remark that if $\pi$ is a $(p-1)$th-symmetric power lift, the weight of $\pi$ must be of the form 
\[
\left\lbrace - \left( \tfrac{p-1}{2} \right) h, - \left( \tfrac{p-1}{2}-1 \right) h, \ldots , -h ,0, h , \ldots , \left( \tfrac{p-1}{2}-1 \right) h, \left( \tfrac{p-1}{2} \right) h \right\rbrace .
\]
Thus, the automorphic representations considered in Theorem \ref{Prin} and Theorem \ref{Princ} cannot be $(p-1)$th-symmetric power lifts. 

Thanks to Serre's modularity conjecture, which is a theorem when $F = \QQ$ (see \cite{KW09a}, \cite{KW09b} and \cite{Di12}), we can prove the following result.

\begin{theorem}\label{Prin1}
Let $p\leq 293$ be an odd prime different from $3$ and $7$. Let $\pi = \otimes' _v \pi_v$ be a regular algebraic, self-dual, cuspidal automorphic representation of $\GL_p(\A_\QQ)$. If $\pi$ is not a $(p-1)$th-symmetric power lift, then there exists a positive Dirichlet density set of primes $\mathcal{L}$ such that for all $\ell \in \mathcal{L}$ the image of $\overline{\rho}_{\pi,\ell}$ contains $\Omega_p(\F_\ell)$.
Moreover, if at some finite place $v$, $\pi_v$ is square integrable, $\mathcal{L}$ has Dirichlet density one.
\end{theorem}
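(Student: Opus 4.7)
The plan is to follow the proofs of Theorems \ref{Prin} and \ref{vfue} verbatim, except at the one step that previously required the weight hypothesis (\ref{peso})---namely, the exclusion of images contained in a conjugate of $\Sym^{p-1}(\PSL_2(\F_\ell))$ inside $\POM_p(\F_\ell)$. All other cases of Aschbacher's classification (reducible subgroups, the two geometric-type subgroups, and class-$\mathcal{S}$ subgroups with $\vert PG_\ell \vert \leq p^4(p+2)$) are ruled out by exactly the arguments used there: Theorem \ref{irred}, Proposition 5.3.2 of \cite{BLGGT14}, and \cite[Lemma 5.3]{CG13} already do the job without the weight condition. So the entire task is to exclude, for almost all $\ell$, the possibility that $\im(\overline{\rho}_{\pi,\ell})$ is contained in $\Sym^{p-1}(\PSL_2(\F_\ell))$, using only that $\pi$ is not a $(p-1)$th-symmetric power lift and that $F=\QQ$.

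Suppose for contradiction that there is a positive-density (resp. density-one) set of primes $\mathcal{L}_0$ on which the image of $\overline{\rho}_{\pi,\ell}$ lies in $\Sym^{p-1}(\PSL_2(\F_\ell))$. For each $\ell \in \mathcal{L}_0$, I would extract, via the inverse of the algebraic map $\Sym^{p-1}:\PSL_2 \to \POM_p$, an absolutely irreducible residual representation $\overline{\sigma}_\ell:G_\QQ \to \GL_2(\overline{\F}_\ell)$ (unique up to twist by a character) such that $\Sym^{p-1}(\overline{\sigma}_\ell)$ agrees with $\overline{\rho}_{\pi,\ell}$ up to a character twist. The oddness of $\overline{\rho}_{\pi,\ell}$ coming from self-duality forces $\overline{\sigma}_\ell$ to be odd, and Serre's modularity conjecture over $\QQ$ (\cite{KW09a}, \cite{KW09b}, \cite{Di12}) then yields a cuspidal Hecke eigenform $f_\ell$ of some weight $k_\ell$ and level $N_\ell$ giving rise to $\overline{\sigma}_\ell$.

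The key uniformity step is to bound $k_\ell$ and $N_\ell$ independently of $\ell$. Fontaine-Laffaille theory (\ref{FLT}) forces the inertial weights of $\overline{\rho}_{\pi,\ell}$ at primes above $\ell$ to come from the fixed Hodge-Tate numbers of $\rho_{\pi,\ell}$; pulling these back through the symmetric power yields a bound on the Serre weight of $\overline{\sigma}_\ell$ independent of $\ell$. Similarly, the prime-to-$\ell$ conductor of $\overline{\sigma}_\ell$ is controlled by the finite ramification set $S_\pi$ of $\pi$, so $N_\ell$ is bounded. Hence $f_\ell$ ranges over a finite set and some eigenform $f$ appears for an infinite subset $\mathcal{L}_1 \subset \mathcal{L}_0$; let $\varpi$ be the corresponding cuspidal automorphic representation of $\GL_2(\A_\QQ)$ with attached compatible system $\{\sigma_{\varpi,\ell}\}_\ell$. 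Comparing traces of Frobenius at the unramified primes modulo infinitely many distinct $\ell \in \mathcal{L}_1$ forces the complex traces of $\pi$ and of $\Sym^{p-1}(\varpi) \otimes \chi$ to coincide for a suitable finite-order character $\chi$ (chosen uniformly on a further infinite subset to absorb the twist ambiguity), and strong multiplicity one then gives $\pi \cong \Sym^{p-1}(\varpi) \otimes \chi$, contradicting the hypothesis.

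The main obstacle I anticipate is precisely this uniformity bookkeeping: $\overline{\sigma}_\ell$ is only determined up to a twist, the twisting character can itself be ramified, and one must check that a single choice of twist---together with the ensuing bounds on the Serre weight and prime-to-$\ell$ conductor, and oddness of the resulting representation---can be made uniformly in $\ell \in \mathcal{L}_0$. Once this is arranged, the finiteness of cuspidal Hecke eigenforms of bounded weight and level, combined with the rigidity of compatible systems, closes the argument as in Theorems \ref{Prin} and \ref{vfue}.
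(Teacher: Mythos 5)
Your proposal follows essentially the same route as the paper: reduce to excluding $\Sym^{p-1}(\PSL_2(\F_\ell))$, descend to an odd two-dimensional residual representation, apply Serre's modularity conjecture with Fontaine--Laffaille control of the weight and the ramification set $S_\pi$ controlling the level, pigeonhole to a single eigenform $f$, and use rigidity of compatible systems (Chebotarev/strong multiplicity one) to conclude $\pi$ is a symmetric power lift, a contradiction. The twist ambiguity you flag is real but is glossed over in the paper's own argument as well, so your treatment is, if anything, slightly more careful on that point.
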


\begin{proof}
As in Theorem \ref{Prin}, the proof is given by showing that the image of $\overline{\rho}_{\pi,\ell}$ cannot be contained in any subgroup lying in cases i $-$ iv of Aschbacher's classification.

From now on, we will assume that the weight of $\pi$ is of the form
$\left\lbrace - \left( \tfrac{p-1}{2} \right) h, \ldots , -h ,0, h , \ldots ,  \left( \tfrac{p-1}{2} \right) h \right\rbrace$. 
The other weights were dealt in Theorem \ref{Prin}.
Note that the case i of Aschbacher's classification can be dealt in exactly the same way as in the proof of Theorem \ref{Prin}. Then, there is a positive Dirichlet density set of primes $\mathcal{L'}$ such that, for all $\ell \in \mathcal{L}'$, $\overline{\rho}_{\pi,\ell}$ is irreducible. 

Now, let $\ell \in \mathcal{L}'$ and assume that the image of $\overline{\rho}_{\pi,\ell}$ is contained in a maximal subgroup lying in case iv of Aschbacher's classification. Then
\[
\overline{\rho}_{\pi, \ell} \simeq \Sym^{p-1}(\overline{\sigma}_\ell),
\]
where $\overline{\sigma}_\ell: G_\QQ \rightarrow \GL_2(\overline{\F}_\ell)$ is a two-dimensional irreducible Galois representation. From \cite[Proposition 1]{Tay12}(see also \cite{Ta16} and \cite{CH16}), we have that, if  $c \in G_\QQ$ is a complex conjugation then $ \Tr (\rho_{\pi, \ell}(c)) = \pm 1$. Thus, by the structure of $\Sym^{p-1}$, we have that $\overline{\sigma}_\ell$ is odd. Hence, from the explicit description of $\Sym^{p-1}$, the form of the Hodge-Tate numbers of $\rho_{\pi,\ell} \vert _{G_{\QQ_\ell}}$ and Serre's modularity conjecture; we have that, if  $\ell \notin S_\pi$ and $(p-1)h \leq \ell$, there exists a cuspidal Hecke eigenform $f$ of weight $h+1\geq 2$ and level bounded independently of $\ell$, such that

\begin{equation}\label{simi}
\rho_{\pi, \ell} \equiv \Sym^{p-1}(\sigma_{\varpi_{f},\ell}) \mod \ell,
\end{equation}
where $\varpi_f$ is the regular algebraic cuspidal automorphic representation of $\GL_2(\A_\QQ)$ corresponding to $f$. We remark that the set of such cuspidal Hecke eigenform (with a fixed weight and bounded level) is finite.
Then, if the congruence (\ref{simi}) is satisfied for infinitely many primes $\ell$, by Dirichlet principle, we have that there is a fixed cuspidal Hecke eigenform $f$ such that
\[
\rho_{\pi, \ell} \equiv \Sym^{p-1}(\sigma_{\varpi_f,\ell}) \mod \ell,
\]
for infinitely many primes $\ell$. Therefore, by Chebotarev's density theorem, it follows that 
\[
\rho_{\pi, \ell} \simeq \Sym^{p-1}(\sigma_{\varpi_f,\ell}),
\]
for all primes $\ell$. Thus, $\pi$ is a $(p-1)$th-symmetric power lift, contradicting our assumption on $\pi$.

Finally, the rest of the cases of Aschbacher's classification can also be dealt as in the proof of Theorem \ref{Prin} and the set of primes $\mathcal{L} \subset \mathcal{L}'$ of positive Dirichlet density can be obtained by removing at most a finite number of small primes from $\mathcal{L'}$. Moreover, if $\pi_p$ is square integrable for some finite place $v$, we can proceed exactly as in Theorem \ref{vfue}.

\end{proof}

Similarly, we can prove the following result concerning to 7-dimensional $G_2$-valued Galois representations such as those studied in Section \ref{sec:4}.

\begin{theorem}\label{Prin2}
Let $\pi = \otimes' _v \pi_v$ be a regular algebraic, self-dual, cuspidal automorphic representation of $\GL_7(\A_\QQ)$ as in Theorem \ref{Chen}. If $\pi$ is not a sixth symmetric power lift then there exists a positive Dirichlet density set of primes $\mathcal{L}$ such that for all $\ell \in \mathcal{L}$ the image of $\overline{\rho}_{\pi,\lambda}$ contains $G_2(\F_{\ell})$.
Moreover, if at some finite place $v$, $\pi_v$ is square integrable, then $\mathcal{L}$ has Dirichlet density one.
\end{theorem}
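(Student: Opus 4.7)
The plan is to recycle the strategy of the proof of Theorem \ref{Prin1}, now running through Kleidman's classification of the maximal subgroups of $G_2(\F_\ell)$ (as in Theorem \ref{Princ}) instead of Aschbacher's. The only step not already contained in the proof of Theorem \ref{Princ} is eliminating case ii of Kleidman's list, namely $\PGL_2(\F_\ell)$ acting via $\Sym^6$, in the situation where the weight of $\pi$ has the ``symmetric-power shape'' $\{-3h,-2h,-h,0,h,2h,3h\}_\tau$ (i.e.\ $k_\tau=2h_\tau$ for every embedding $\tau$); for all other weights Theorem \ref{Princ} applies verbatim. Case i (reducible subgroups) is ruled out for a positive Dirichlet density set $\mathcal{L}'$ of primes exactly as in Theorem \ref{Princ}, using Theorem \ref{irred} and \cite[Proposition 5.3.2]{BLGGT14}; case iii (subgroups of order bounded independently of $\ell$) is ruled out for all but finitely many primes by \cite[Lemma 5.3]{CG13}.

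It remains to treat case ii. Suppose for contradiction that $\overline{\rho}_{\pi,\ell}$ has image contained in $\Sym^6(\PGL_2(\F_\ell))$ for infinitely many $\ell\in\mathcal{L}'$; then $\overline{\rho}_{\pi,\ell}\simeq \Sym^6(\overline{\sigma}_\ell)$ for some two-dimensional $\overline{\sigma}_\ell:G_\QQ\to\GL_2(\overline{\F}_\ell)$, which is absolutely irreducible because $\overline{\rho}_{\pi,\ell}$ is. By \cite[Proposition 1]{Tay12}, the trace of $\rho_{\pi,\ell}(c)$ at any complex conjugation $c$ equals $\pm1$; a direct computation of $\Sym^6$ of a matrix with eigenvalues in $\{\pm1\}$ shows that this is only compatible with $\overline{\sigma}_\ell(c)$ having eigenvalues $\{1,-1\}$, so $\overline{\sigma}_\ell$ is odd. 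Combining the shape of the $\tau$-Hodge--Tate numbers of $\rho_{\pi,\ell}|_{G_{\QQ_\ell}}$ (recalled in Theorem \ref{Chen}) with Fontaine--Laffaille theory, for $\ell\notin S_\pi$ with $6h<\ell$ we also obtain that $\overline{\sigma}_\ell$ is crystalline with Hodge--Tate weights $\{0,h\}$ up to an appropriate twist. Serre's modularity conjecture, which is a theorem over $\QQ$, then produces a cuspidal Hecke eigenform $f_\ell$ of weight $h+1$ and level bounded independently of $\ell$ such that $\overline{\sigma}_\ell\simeq \overline{\sigma}_{\varpi_{f_\ell},\ell}$, whence $\rho_{\pi,\ell}\equiv \Sym^6(\sigma_{\varpi_{f_\ell},\ell})\bmod \ell$. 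A pigeonhole argument on the finite set of such eigenforms yields a single $f$ for which the congruence holds at infinitely many $\ell$, and Chebotarev's density theorem upgrades it to an honest isomorphism $\rho_{\pi,\ell}\simeq \Sym^6(\sigma_{\varpi_f,\ell})$ for every $\ell$, contradicting the hypothesis that $\pi$ is not a sixth symmetric power lift.

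The three eliminations together produce the desired positive Dirichlet density set $\mathcal{L}$ after discarding finitely many small primes from $\mathcal{L}'$. The square-integrability refinement follows verbatim as in Theorem \ref{vfuer}: under that hypothesis Theorem \ref{irred} gives irreducibility of $\rho_{\pi,\ell}$ for every $\ell$ coprime to the distinguished finite place, and \cite[Proposition 5.3.2]{BLGGT14} then upgrades $\mathcal{L}'$ (and hence $\mathcal{L}$) to a set of Dirichlet density one. The main obstacle is the case ii step, where one must simultaneously extract oddness, crystallinity and a controlled Hodge--Tate weight for $\overline{\sigma}_\ell$ in order to legitimately invoke Serre's conjecture with an effectively bounded level; this is handled by the same mechanism already used in the proof of Theorem \ref{Prin1}.
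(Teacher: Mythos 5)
Your proposal is correct and is exactly the argument the paper intends: the paper gives no written proof of Theorem \ref{Prin2}, saying only that it follows ``similarly,'' and your write-up is precisely the expected combination of Kleidman's classification as in Theorem \ref{Princ} (cases i and iii), the Serre-modularity/pigeonhole/Chebotarev elimination of the $\Sym^6(\PGL_2)$ case from Theorem \ref{Prin1}, and the density-one upgrade of Theorem \ref{vfuer}. No gaps.
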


\begin{remark}
From the computations of Ta\"ibi \cite{Ta17}, we can obtain an example of cuspidal automorphic representation satisfying Theorem \ref{Prin2} but not Theorem \ref{Princ}. More precisely, from Theorem 6.12 of \cite{Ch19} we have that there exists a regular algebraic, self-dual, cuspidal automorphic representation $\pi$ of $\GL_7(\A_\QQ)$ satisfying Theorem \ref{Chen}, but contained in $\mathcal{G}_2(8,4)$. Then, it does not satisfy Theorem \ref{Princ}. 
However, this cuspidal automorphic representation satisfies Theorem \ref{Prin2} because it is not a sixth symmetric power lift. Indeed, if it were a sixth symmetric power lift, by the discussion in the proof of Theorem \ref{Prin1}, it should come from a cuspidal Hecke eigenform $f$ of weight $5$ and level 1, which does not exist. 
\end{remark}

As we mentioned above, large image conjecture is a theorem for $n=3$ due to Dieulefait and Vila \cite{DV04}. We end this paper by proving the analogous result for $n=5$, which is a simple consequence of  our previous results and the recent work of Hui \cite{Hui22}.

\begin{corollary}\label{lic}
Let $\pi$ be a regular algebraic, self-dual, cuspidal automorphic representation of $\GL_5(\A_\QQ)$, which is not a fourth symmetric power lift. Then, the image of $\overline{\rho}_{\pi,\ell}$ contains $\Omega_5(\F_\ell)$ for almost all $\ell$.
 \end{corollary}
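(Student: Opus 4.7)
The plan is to rerun the argument of Theorem \ref{Prin1} in the special case $p=5$, replacing the positive-density residual irreducibility input (Theorem \ref{irred} together with Proposition 5.3.2 of \cite{BLGGT14}) by the stronger recent result of Hui \cite{CYH23}, which asserts that for a regular algebraic, self-dual, cuspidal automorphic representation $\pi$ of $\GL_n(\A_F)$ with $n \leq 6$, the residual representation $\overline{\rho}_{\pi,\ell}$ is irreducible for all but finitely many primes $\ell$. Since $5 \leq 6$, this directly rules out case i of Aschbacher's classification of maximal subgroups of $\Omega_5(\overline{\F}_\ell)$ for all but finitely many primes $\ell$, which is exactly the step that in Theorem \ref{Prin1} produced only a positive-density set.

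For the remaining Aschbacher cases, I would quote the analysis from the proof of Theorem \ref{Prin1} essentially verbatim. Cases ii and iii, as well as case iv with $\vert \mathrm{P}G_\ell \vert \leq p^4 (p+2)$, correspond to subgroups of order bounded independently of $\ell$, and hence by \cite[Lemma 5.3]{CG13} they can contain $\im (\overline{\rho}_{\pi,\ell})$ for only finitely many $\ell$. The only delicate case is case iv with $S \cong \PSL_2(\F_\ell)$, in which necessarily $\overline{\rho}_{\pi,\ell} \simeq \Sym^{4}(\overline{\sigma}_\ell)$ for some two-dimensional odd irreducible mod $\ell$ representation $\overline{\sigma}_\ell$. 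Here I would apply Serre's modularity conjecture over $\QQ$ to lift $\overline{\sigma}_\ell$ to a cuspidal Hecke eigenform of weight $h+1$ and level bounded independently of $\ell$; the finiteness of the resulting set of eigenforms, combined with the Dirichlet pigeonhole argument and Chebotarev's density theorem exactly as in the proof of Theorem \ref{Prin1}, forces $\pi$ to be a fourth symmetric power lift whenever this subcase occurs for infinitely many $\ell$, contradicting the hypothesis on $\pi$. Consequently this subcase, too, is confined to a finite set of primes.

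Assembling the four exclusions, for all but finitely many primes $\ell$ the image of $\overline{\rho}_{\pi,\ell}$ cannot lie in any maximal subgroup of $\Omega_5(\overline{\F}_\ell)$, and therefore it must contain $\Omega_5(\F_\ell)$, which is the conclusion of the corollary.

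The only genuinely new ingredient compared to Theorem \ref{Prin1} is Hui's input \cite{CYH23}; this is precisely what upgrades the conclusion from "a positive Dirichlet density set" to "almost all primes". This is also where the main obstacle lies: the reason the argument does not extend directly to the other odd primes $5 \neq p \leq 293$ covered by Theorem \ref{Prin1} is that analogous residual irreducibility for almost all $\ell$ is, at the time of writing, known only in dimension $n \leq 6$.
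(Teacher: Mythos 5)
Your proposal is correct and follows essentially the same route as the paper: Hui's residual irreducibility result for $n\leq 6$ handles case i for almost all $\ell$, and the remaining Aschbacher cases are disposed of exactly as in the earlier theorems. If anything, your explicit appeal to the Serre-modularity/pigeonhole argument of Theorem \ref{Prin1} for the $\Sym^{4}(\PSL_2)$ subcase is the more accurate reference, since the hypothesis of the corollary is ``not a fourth symmetric power lift'' rather than the weight condition of Theorem \ref{Prin}, which is the theorem the paper's one-line proof nominally cites.
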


\begin{proof}
In this case, residual irreducibility of $\overline{\rho}_{\pi,\ell}$ is known for almost all $\ell$ \cite[Theorem 1.4]{Hui22}. Then, the image of $\overline{\rho}_{\pi,\ell}$ cannot be contained in a maximal subgroup in case i of Aschbacher's classification for almost all $\ell$. The rest of maximal subgroups of $\Omega_5(\F_\ell)$ can be dealt exactly the same as in the proof of Theorem \ref{Prin}.

\end{proof}

\subsection*{Acknowledgments}

I would like to thank the organizers of the BIRS-CMO workshop ``Langlands Program: Number Theory and Representation Theory", held in Casa Matemática Oaxaca (CMO), for inviting me to share part of my research and their encouragement to write this paper. Some results in this paper were proved during a FONDECYT postdoctoral stay at the Instituto de Matemáticas PUCV and a CONACYT postdoctoral stay at the Unidad Cuernavaca del Instituto de Matemáticas UNAM. 
In particular, some results of my old unpublished paper arXiv:2008.00556 are included in this work. I sincerely thank these institutions for their support and the optimal working conditions. I also thank Ariel Weiss for very useful comments and suggestions on a previous version of this paper. Finally, I want to give special thanks to the referee, whose suggestions and comments have greatly improved the presentation of this paper.

The author was partially supported by the ANID Proyecto FONDECYT Postdoctorado No. 3190474, Estancias Posdoctorales por México 2021 - Modalidad Académica CONACYT and the CONAHCYT grant CBF2023-2024-224.

\noindent 
\textsc{Centro de Investigación en Matemáticas, A.C. \\ Jalisco s/n. Col. Valenciana \\
 36023 Guanajuato, Mexico.}\\

\newpage

\noindent \textit{Current Address:}

\smallskip

\noindent 
\textsc{Universidad Autónoma Metropolitana - Unidad Iztapalapa \\
Departamento de Matemáticas, Edificio Anexo al T\\
Av San Rafael Atlixco No.186 Col.Vicentina \\
C.P. 09340, Iztapalapa, Mexico City, Mexico.}\\
E-mail: \texttt{adrian.zenteno@xanum.uam.mx}

\end{document}